\newcommand{\R}{\mathbb{R}}
\newcommand{\diam}{\mathrm{diam}}
\newcommand{\dist}{\mathrm{dist}}
\newtheorem{definition}{Definition}
\newtheorem{theorem}{Theorem}
\newtheorem{remark}{Remark}
\newcommand{\ux}{\underline{x}}
\newcommand{\uy}{\underline{y}}
\newcommand{\E}{{\bf E}}
\begin{document}

\title{Reduction Procedure for obtaining solutions of the scalar additive Jump problem and Riemann Boundary Value Problem in vectorial Clifford analysis}

\author{Carlos Daniel Tamayo-Castro$^{1}$, Juan Bory-Reyes$^{2}$, Ricardo Abreu-Blaya$^{3}$}
\date { \small{$^{1}$Facultad de Ciencias. Universidad Nacional Aut\'onoma de M\'exico, Ciudad M\'exico, M\'exico.\\ Email: cdtamayoc@ciencias.unam.mx\\
		$^{2}$SEPI-ESIME, Instituto Polit\'ecnico Nacional, Ciudad de M\'exico, M\'exico\\ Email: juanboryreyes@yahoo.com \\
	    $^{3}$Facultad de Matem\'aticas, Universidad Aut\'onoma de Guerrero, Chilpancingo, M\'exico}\\ Email: rabreublaya@yahoo.es
}
\maketitle

\begin{abstract} 
In this paper, we study the existence of solutions to the scalar additive Jump problem and the Riemann boundary value problems in the context of vectorial Clifford analysis on domains with fractal boundaries. A reduction procedure is applied with great effectiveness to find the solution of the problems.
\vspace{0.3cm}

\small{
\noindent
\textbf{Keywords.} Clifford analysis, Riemann boundary value problem, Cauchy-Riemann and Dirac operators, Fractal boundaries. 

\noindent
\textbf{MSC 2020.} Primary: 30G35, 28A80, Secondary: 30G30, 30E25.}  
\end{abstract}

\section{Introduction}
The Riemann boundary value problem (RBVP for short) for holomorphic functions in a bounded Jordan domain of the complex plane has been largely studied and applied to many branches of mathematics, physics, and engineering, see \cite{Gajov, Lu, Mu} for extensive treatments and discussions.

In the solution to the RBVP, the main mathematical tool is the Cauchy type integral. Even though it loses its meaning on fractal curves, the RBVP remains completely valid. A seminal work in the field was presented by Boris Kats in \cite{BKats83}, and in his subsequent publications \cite{Kats2009,Kats2014}. In particular, solvability conditions involving the upper Minkowski dimension of the domain's boundary and the H\"older exponent of the data function associated with the problem were considered. For a brief summary of this approach, we refer the reader to \cite{AbreuBlaya2011, BlayaReyesKats}.

In \cite{BDKats16, 1DKats16, 2DKats16}, the Marcinkiewicz exponent in the context of complex analysis was introduced. Using this new metric characteristic of the boundary, the solvability of the RBVPs were considerably sharped.

Clifford analysis generalizes, in a natural way, complex analysis to higher dimensions. One of its main approaches studies functions defined at $\mathbb{R}^{n}$ with values in the Clifford algebras $\mathcal{C}\ell(n)$ vanishing the Dirac operator. We follow \cite{TAB21}, assuming the so-called vectorial Clifford analysis approach. On the other hand, we will refer to the paravectorial Clifford analysis approach that studies functions defined at $\mathbb{R}^{n + 1}$ and valued in $\mathcal{C}\ell(n)$, mainly those that nullify the Clifford Cauchy-Riemann operator. The reader is referred to  \cite{BDS, GHS08, Gilbert1991} for a standard account of the theory.

As was pointed out in \cite[pp 22--24]{Ryan96}, there exist considerable obstacles for a complete treatment of the Riemann boundary value problem for monogenic functions. These are a direct consequence of the lack of commutativity in the Clifford product. This fact explains why an explicit solution to the Riemann boundary value problems has been found only for the particular case of the Jump problem and some slight modifications, see \cite{AB2001,Brs2001}. It is worth noting the recent work \cite{BoryReyes2016}, where a class of compound boundary value problems for the homogeneous Dirac equation in two and three dimensions was studied when one of the two boundary conditions is loaded. It is shown how the absence of commutativity in Clifford algebra paradoxically relaxes the conditions that guarantee the solvability of considered problems.

In the Clifford analysis setting, the solvability of the Riemann boundary value problem over fractal domains is mainly analyzed on the case of the scalar additive Jump problem. The original impetus of the current investigation on the plane involving the Marcinkiewicz exponent has been generalized recently to higher dimensions in \cite{TamayoCastro2022}.

The reduction procedure for the Riemann boundary value problem when the boundary is a smooth surface and relative to the elements $e_1$ and $e_n$ was developed in \cite{TAB21}. Particularly, to guarantee the existence and uniqueness of a solution to the RBVP in lower dimensional vectorial Clifford analysis were analyzed. This study showed that the problem solubility table may be accomplished in a variety of ways: no solution or exactly one solution or infinitely many linearly independent solutions.

Our purpose in this paper is to present a more general form of the reduction procedure for the Riemann boundary value problem, and then to apply this approach to the fractal setting. 

The manuscript is organized as follows. Section \ref{SecPreliminars} presents the necessary background about Clifford analysis, fractal geometric characteristics of the boundaries, and states a Whitney type extension theorem. In Section \ref{SecReductionProcedure}, a more general form of the reduction procedure for the Riemann boundary value problem is presented. In Section \ref{SecApplicationsFractal}, the main results of this work are developed. First, unique solvability conditions are proved for the scalar additive Jump problem in the vectorial Clifford analysis using the reduction procedure and similar results in the paravectorial approach. The effectiveness of the method has been proved in the sense that we get more expressions for the solutions than those we obtain when the method developed in \cite{TamayoCastro2022} in the paravectorial setting is reproduced. Second, the procedure is applied to solve the Riemann boundary value problem for a class of variable coefficients in lower-dimensional vectorial Clifford analysis.

\section{Preliminaries and Notations} \label{SecPreliminars}
This section contains essential background required to present the results in the subsequent sections. 
\subsection{Clifford Algebras and Monogenic Functions}
Rudiments of Clifford algebras and Clifford analysis are reviewed in this subsection. A discussion of the subjects can be found at \cite{BDS, Gilbert1991, GHS08}.
\begin{definition}\label{CliffAlg}
	The Clifford algebra associated with $\mathbb{R}^{n}$, endowed with the usual Euclidean metric, is the minimal enlargement of $\mathbb{R}^{n}$ to a real unitary, associative algebra $\mathcal{C}\ell(n)$ and satisfies 
	\begin{eqnarray*}
		x^{2} = -\arrowvert x\arrowvert^{2},   
	\end{eqnarray*}
	for   any  $x \in \mathbb{R}^{n}$.
\end{definition}
We denote the standard orthonormal basis of $\mathbb{R}^{n}$ by $\{e_{j}\}_{j = 1}^{n}$ under the multiplication rules
\begin{equation*}
	e_{i}e_{j} + e_{j}e_{i} = -2\delta_{ij},
\end{equation*}
where $\delta_{ij}$ stands for the Kronecker delta.

Each element $a \in \mathcal{C}\ell(n)$ has the form $a = \sum\limits_{A \subseteq N}a_{A}e_{A}$, $N = \{1, \ldots, n \}$, $a_{A} \in \mathbb{R}$ where $e_{\emptyset} = e_{0} = 1$, $e_{\{ j\}} = e_{j}$ and $e_{A} = e_{\beta_{1}}\cdots e_{\beta_{k}}$ for $A = \{\beta_{1}, \ldots, \beta_{k} \}$ where $\beta_{j} \in \{1, \ldots, n \}$ and $\beta_{1} < \ldots < \beta_{k}$.

The Clifford algebra $\mathcal{C}\ell(n)$ endowed with the norm $\arrowvert a\arrowvert = \left(\sum_{A}a_{A}^{2}\right)^{\frac{1}{2}}$ becomes a Euclidean space. The anti-involution $a\mapsto\overline{a}:= \sum_{A}a_{A}\overline{e}_{A}$ for which 
\begin{equation*}
	\overline{e}_{A} := (-1)^{k}e_{\beta_{k}} \cdots e_{\beta_{2}}e_{\beta_{1}}.
\end{equation*}
define the conjugation in $\mathcal{C}\ell(n)$.

We define $\mathcal{C}\ell(n)^{(k)} = span_{\mathbb{R}}(e_{A}: |A| = k)$. The subspace $\mathcal{C}\ell(n)^{(k)}$ of $\mathcal{C}\ell(n)$ is named the $k$-vectors space and 
\begin{equation*}
	\mathcal{C}\ell(n) = \bigoplus_{k = 0}^{n}\mathcal{C}\ell(n)^{(k)}.
\end{equation*}
The space of paravectors $\mathcal{C}\ell(n)^{(0)}\oplus\mathcal{C}\ell(n)^{(1)}$ is an important subspace of the real Clifford algebra $\mathcal{C}\ell(n)$. We should note that every $x = (x^{0}, x^{1}, \ldots, x^{n}) \in \mathbb{R}^{n + 1}$ will be identified with
\begin{equation*}
	x = x^{0} + \sum_{j = 1}^{n}x^{j}e_{j} \in \mathcal{C}\ell(n)^{(0)}\oplus\mathcal{C}\ell(n)^{(1)}.
\end{equation*}
Analogously, $\ux = (x^{1}, \ldots, x^{n}) \in \mathbb{R}^{n}$ will be identified with
\begin{equation*}
	\ux = \sum_{j = 1}^{n}x^{j}e_{j} \in  \mathcal{C}\ell(n)^{(1)}.
\end{equation*}
Let us highlight the fact that for each $i_{0} \in \left\lbrace 1, 2, \dots, n \right\rbrace$ we have that $\mathcal{C}\ell(n) = \mathcal{C}\ell(n)^{+}\oplus e_{i_{0}}\mathcal{C}\ell(n)^{+}$, where 
\[\mathcal{C}\ell(n)^{+}:=\bigoplus_{k-even}\mathcal{C}\ell(n)^{(k)}.\]
Hence, for every $a \in \mathcal{C}\ell(n)$ the following decomposition holds
\begin{equation}\label{eo}
	a = a_{0} + e_{i_{0}}a_{1},
\end{equation}
where $a_{0}, a_{1}$ will be respectively referred to as its even and odd parts.

The functions $u$ to be considered are defined in $\Omega\subset\mathbb{R}^{n + 1}$ (respectively $\Omega\subset\mathbb{R}^{n}$) and take values in $\mathcal{C}\ell(n)$. They have the representation
\begin{equation*}
	u(x) = \sum_{A}u_{A}(x)e_{A},
\end{equation*}
here $u_{A}$ are $\mathbb{R}$-valued functions. The study of these functions will be called paravectorial (respectively vectorial) Clifford analysis. If each component $u_{A}$ belongs to some classical class of functions on $\Omega$ we say that $u$ belongs to that class.

The Cauchy-Riemann and Dirac operators are respectively defined by
\begin{equation*}
	\mathcal{D}_{n} := \sum^{n}_{j = 0}e_{j}\dfrac{\partial}{\partial x_{j}}.
\end{equation*}
\begin{equation*}
	\partial_{n} := \sum^{n}_{j = 1}e_{j}\dfrac{\partial}{\partial x^{j}}.
\end{equation*}
Let $\Omega\subset\mathbb{R}^{n + 1}$ be an open set and $u \in C^{1}(\Omega)$ then $u$ will be called left (respectively right) monogenic in $\Omega$ if $\mathcal{D}_{n}u = 0$ (respectively $u\mathcal{D}_{n} = 0$) in $\Omega$. Analogously, in the vectorial approach, the same notation will be used if $\partial_{n}u = 0$ (resp. $u\partial_{n} = 0$).

The fundamental solution of the Cauchy-Riemann and Dirac operators are respectively given by 
\begin{equation*}\label{Ch1EqFundSolCauchRiem}
	\begin{array}{cc}
		E_{n}(x) = \dfrac{1}{\sigma_{n + 1}}\dfrac{\overline{x}}{|x|^{n + 1}}, &  x\in\mathbb{R}^{n + 1}\setminus \{0\}
	\end{array} 
\end{equation*}
\begin{eqnarray*}
	\begin{array}{cc}
		\vartheta_{n}(\ux) = \dfrac{1}{\sigma_{n}}\dfrac{\overline{\ux}}{|\ux|^{n}}, &  \ux\in\mathbb{R}^{n}\setminus \{0\}
	\end{array} 
\end{eqnarray*}
where $\sigma_{n}$ is the area of the unit sphere in $\mathbb{R}^{n}$. 

The theory of monogenic functions with values in Clifford algebras is a natural generalization of the theory of holomorphic functions in one variable complex analysis to a higher dimensional Euclidean space. 

We will take up the Teodorescu transform, see \cite{GHS08} for more details.
\begin{definition}\label{Ch1DefTeodTransfRig}
	Let $\Omega\subset \mathbb{R}^{n+1}$ be a domain and let $u \in C^{1}(\overline{\Omega})$, the operators defined by 
	\begin{eqnarray*}
		(T^{l}_{\Omega}u)(x) = -\int\limits_{\Omega}E_{n}(y-x)u(y)dV(y), & x \in \mathbb{R}^{n + 1}, 
	\end{eqnarray*}
	\begin{eqnarray*}
		(T^{r}_{\Omega}u)(x) = -\int\limits_{\Omega}u(y)E_{n}(y-x)dV(y), & x \in \mathbb{R}^{n + 1}, 
	\end{eqnarray*}
	where $dV(y)$ is the volume element, are called the left and right Teodorescu transform respectively. 
\end{definition} 
The corresponding Teodorescu transform in the vectorial Clifford analysis setting occurs analogously with $E_{n}(x)$ replaced by $\vartheta_{n}(\ux)$.
\subsection{Fractal Dimensions and Characteristics}\label{SubSecFracDim}
In order to make this paper self-contained, we present some basic notions about fractal dimensions and characteristics. For a fuller treatment of this topic, we refer the reader to \cite{Fal, Mandelbrot, Matt}.

Let $\textbf{E}$ be a nonempty set in $\mathbb{R}^{n}$. For each $\delta > 0$ and $s \geq 0$, the value $\mathcal{H}_{\delta}^{s}(\textbf{E})$  is defined as, 
\begin{equation*}
	\mathcal{H}_{\delta}^{s}(\textbf{E}) := \inf\left\lbrace \sum_{i = 1}^{\infty}\diam(U_{i})^{s}: \{U_{i}\} \ is \ a  \ \delta-covering \ of \ \textbf{E}\right\rbrace ,
\end{equation*} 
where $\diam(U)$ stands for the diameter of the set $U$. Here, the infimum is understood over all countable $\delta$-coverings ${U_{i}}$ of $\textbf{E}$ with open or closed balls. 
\begin{definition}
	The $s$-dimensional Hausdorff measure is defined by	the limit
	\begin{equation*}
		\mathcal{H}^{s}(\textbf{E}) := \lim _{\delta\rightarrow 0}\mathcal{H}_{\delta}^{s}(\textbf{E}).
	\end{equation*} 
\end{definition}
Then, the Hausdorff dimension can be introduced. 
\begin{definition}\label{DefHausDim}
	The Hausdorff dimension of \textbf{E} is defined as
	\begin{equation*}
		\dim _{H}\textbf{E} := \inf\left\lbrace s \geq 0: \mathcal{H}^{s}(\textbf{E}) = 0\right\rbrace  = \sup\left\lbrace s \geq 0: \mathcal{H}^{s}(\textbf{E}) = \infty\right\rbrace.
	\end{equation*}   
\end{definition}
The following definition of fractal set was given in \cite{Mandelbrot}, where the term fractal was coined.
\begin{definition}\label{DefFractMan}
	If an arbitrary set $\textbf{E} \subset \mathbb{R}^{n+1}$ with topological dimension $n$ has $\dim_{H}\textbf{E} > n$,  then $\textbf{E}$ is called a fractal set in the sense of Mandelbrot.
\end{definition}
The following new metric characteristics of a fractal set in $\mathbb{R}^{n + 1}$ are mainly included to keep the exposition as self-contained as possible. These can be found in \cite{TamayoCastro2022}.

Let $\mathcal{S}$ be a topologically compact hypersurface in $\mathbb{R}^{n +1}$, which bounds a Jordan domain $\Omega^{+}$. We write $\Omega^{-}$ for the unbounded complement. It is assumed $\mathcal{S}$ to be fractal.

Let $D \subset \mathbb{R}^{n + 1}$ be a bounded set, which does not touch the hypersurface $\mathcal{S}$. We will consider the integral
\begin{displaymath}
	I_{p}(D) = \int\limits_{D}\dfrac{dV(x)}{\dist^{p}(x, \mathcal{S})}.
\end{displaymath}

\begin{definition}\label{DefMarcExp}
	Let $\mathcal{S}$ be a topologically compact hypersurface which is the boundary of a Jordan domain in $\mathbb{R}^{n +1}$. We define the inner and outer Marcinkiewicz exponent of $\mathcal{S}$, respectively, as
	\begin{displaymath}
		\begin{array}{cc}
			\mathfrak{m}^{+}(\mathcal{S}) = \sup \{p: I_{p}(\Omega^{+}) < \infty\},  & \mathfrak{m}^{-}(\mathcal{S}) = \sup\{p: I_{p}(\Omega^{*}) < \infty\}, 
		\end{array} 
	\end{displaymath}
	with the notation $\Omega^{*} := \Omega^{-}\bigcap\{x: \arrowvert x\arrowvert < r\}$, where $r$ is chosen in a way that $\mathcal{S}$ is completely contained inside the ball of radius $r$. 	
	
	The (absolute) Marcinkiewicz exponent of $\mathcal{S}$ is defined by,
	\begin{displaymath}
		\mathfrak{m}(\mathcal{S}) = \max\{\mathfrak{m}^{+}(\mathcal{S}), \mathfrak{m}^{-}(\mathcal{S})\}.
	\end{displaymath} 
\end{definition}
We remark that the value of $\mathfrak{m}^{-}(\mathcal{S})$ is independent of the choice of the radius $r$ in the construction of $\Omega^{*}$, due to the fact that only the points closest to $\mathcal{S}$ have any influence the convergence of the integral $I_{p}(D)$.

\subsection{H\"older continuous functions and Whitney extension Theorem}
The data functions of the Riemann boundary value problems to be studied throughout the paper belong to the H\"older class of continuous functions, see \cite{St}. 
\begin{definition}\label{Ch1DefLip}
	Let be $\E \subset \mathbb{R}^{m}$ and $0 < \nu \leq 1$, a function $f: \E \mapsto \mathbb{R}$ satisfying 
	\begin{eqnarray*}\label{HoldCond}
		|f(x) - f(y)| \leq M|x - y|^{\nu}; &  x, y \in \E,
	\end{eqnarray*}
	is called H\"older continuous function with exponent $\nu$. The space of all these functions will be denoted by $C^{0,\nu}(\E)$.
\end{definition}
The Whitney type extension theorem for Clifford valued functions was presented in \cite{ABT2007}. It was based on the result \cite[pp 174]{St} for real-valued functions, originally stated by Hassler Whitney. 
\begin{theorem}\label{ExWhitney}
	Let $\textbf{E} \subset \mathbb{R}^{n+1}$ be a compact set and let $u \in C^{0,\nu}(\E)$, with $0 < \nu \leq 1$. Then, there exists a function $\widetilde{u} \in C^{0,\nu}(\mathbb{R}^{n+1})$, named Whitney extension operator of $u$, that satisfies\\
	(i) $\widetilde{u}\arrowvert_{E} = u$,\\
	(ii) $\widetilde{u} \in C^{\infty}(\mathbb{R}^{n+1}\setminus \textbf{E})$,\\
	(iii) $\arrowvert \mathcal{D}\widetilde{u}(x) \arrowvert \leq C\dist(x, \textbf{E})^{\nu - 1}$ for $x \in \mathbb{R}^{n+1}\setminus \textbf{E}$.
\end{theorem}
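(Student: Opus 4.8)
The plan is to reduce the assertion to the classical scalar Whitney extension theorem and then to transcribe its consequences into the Clifford setting. Writing $u = \sum_A u_A e_A$ with each component $u_A : \textbf{E} \to \mathbb{R}$ belonging to $C^{0,\nu}(\textbf{E})$, I would apply the real-valued Whitney extension theorem \cite[p.~174]{St} to every $u_A$, obtaining $\widetilde{u_A} \in C^{0,\nu}(\mathbb{R}^{n+1})$ with $\widetilde{u_A}|_{\textbf{E}} = u_A$, $\widetilde{u_A} \in C^{\infty}(\mathbb{R}^{n+1}\setminus\textbf{E})$, and $|\nabla\widetilde{u_A}(x)| \leq C\,\dist(x,\textbf{E})^{\nu-1}$ on $\mathbb{R}^{n+1}\setminus\textbf{E}$, and then set $\widetilde{u} := \sum_A \widetilde{u_A}\, e_A$. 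Properties (i) and (ii) are inherited componentwise, and the construction is plainly $\mathbb{R}$-linear in $u$, which justifies calling it an extension operator. Since $\mathcal{D}$ is a first-order operator with constant coefficients, $\mathcal{D}\widetilde{u} = \sum_{j}\sum_A (\partial_j\widetilde{u_A})\, e_j e_A$, whence $|\mathcal{D}\widetilde{u}(x)| \leq \sum_{j,A}|\partial_j\widetilde{u_A}(x)| \leq C'\,\dist(x,\textbf{E})^{\nu-1}$ with $C'$ depending only on $n$ and $M$; this gives (iii).

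It thus suffices to recall why the scalar extension enjoys the listed properties, and I would indicate the Whitney construction because the gradient bound is precisely the quantitative ingredient needed in the fractal applications. Let $\{Q_k\}$ be a Whitney decomposition of the open set $F := \mathbb{R}^{n+1}\setminus\textbf{E}$ into closed dyadic cubes with pairwise disjoint interiors and $\diam(Q_k) \leq \dist(Q_k,\textbf{E}) \leq 4\diam(Q_k)$, and let $\{\varphi_k\}$ be an associated smooth partition of unity with $\sum_k\varphi_k\equiv 1$ on $F$, $0\leq\varphi_k\leq 1$, $\operatorname{supp}\varphi_k$ inside a fixed dilate $Q_k^{*}$ of $Q_k$, and $|\partial^{\alpha}\varphi_k|\leq C_{\alpha}\diam(Q_k)^{-|\alpha|}$. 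For each $k$ I would choose $p_k\in\textbf{E}$ realizing $\dist(Q_k,\textbf{E})$ and set, for scalar $u$,
\begin{equation*}
	\widetilde{u}(x) = u(x) \ \text{ for } x \in \textbf{E}, \qquad \widetilde{u}(x) = \sum_k \varphi_k(x)\, u(p_k) \ \text{ for } x \in F .
\end{equation*}
Because the sum is locally finite and each $\varphi_k$ is smooth, $\widetilde{u}\in C^{\infty}(F)$, which is (ii); and $\widetilde{u}|_{\textbf{E}}=u$ is immediate, so (i) holds once global Hölder continuity is established.

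The remaining points rest on two geometric facts: if $x\in Q_k^{*}$ then $\diam(Q_k)\approx\dist(x,\textbf{E})$ and $|x-p_k|\lesssim\dist(x,\textbf{E})$, and only a number of the $Q_k^{*}$ bounded in terms of $n$ can overlap at a given point. The global Hölder estimate for $\widetilde{u}$ is checked in the three standard configurations ($x,y\in\textbf{E}$, trivial; $x,y\in F$; and $x\in\textbf{E}$, $y\in F$), each time comparing the values $u(p_k)$ with one another or with $u(x)$ through the Hölder bound on $u$ together with the length estimates above. For the gradient bound, fix $x\in F$ and a cube $Q_{k_0}$ with $x\in Q_{k_0}^{*}$; from $\sum_k\partial_j\varphi_k\equiv 0$ on $F$ one gets $\partial_j\widetilde{u}(x) = \sum_k \partial_j\varphi_k(x)\big(u(p_k)-u(p_{k_0})\big)$, a sum over boundedly many indices, where $|u(p_k)-u(p_{k_0})|\leq M|p_k-p_{k_0}|^{\nu}\lesssim\dist(x,\textbf{E})^{\nu}$ and $|\partial_j\varphi_k(x)|\lesssim\diam(Q_k)^{-1}\approx\dist(x,\textbf{E})^{-1}$, so $|\nabla\widetilde{u}(x)|\leq C\,\dist(x,\textbf{E})^{\nu-1}$. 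I expect the only real work to be bookkeeping — tracking the absolute constants from the partition of unity and from the overlap bound — together with the mixed Hölder case $x\in\textbf{E}$, $y\in F$ with $|x-y|$ not small, where $\widetilde{u}(y)$ must be compared with $u(x)$ via an intermediate point of $\textbf{E}$; no difficulty specific to the Clifford algebra arises, since the argument is just the componentwise transcription of Stein's.
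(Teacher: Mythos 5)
The paper does not prove this theorem: it is quoted from \cite{ABT2007}, which in turn rests on the scalar Whitney extension theorem of \cite[pp 174]{St}. Your argument --- componentwise application of the scalar result to each $u_A$, followed by the standard Whitney-decomposition construction in which the identity $\sum_k\partial_j\varphi_k\equiv 0$ on the complement of $\textbf{E}$ yields the bound $\arrowvert\mathcal{D}\widetilde{u}(x)\arrowvert\leq C\dist(x,\textbf{E})^{\nu-1}$ --- is precisely that route and is correct.
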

We refer the reader to \cite{APB2007} for the proof of the following Dolzhenko type theorem.
\begin{theorem}\label{CorrDolz}
	Let $\Omega$ be a domain in $\mathbb{R}^{n+1}$ and $\textbf{E} \subset \Omega$ be a compact set. Let $\mathcal{H}^{n + \nu}(\textbf{E}) = 0$ where $0 < \nu \leq 1$. If $u \in C^{0,\nu}(\Omega)$, and it is monogenic in $\Omega\setminus\textbf{E}$, then $u$ is also monogenic in $\Omega$.   
\end{theorem}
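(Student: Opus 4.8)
\noindent\emph{Plan of proof.}
The plan is to show that the distributional Cauchy--Riemann derivative $\mathcal{D}_{n}u$ vanishes throughout $\Omega$. Since $u$ is already monogenic (hence real-analytic) on the open set $\Omega\setminus\textbf{E}$, the distribution $\mathcal{D}_{n}u$ is supported on $\textbf{E}$, and everything comes down to showing this ``singular part'' is zero. Granting that, I would finish as follows: applying the conjugate operator $\overline{\mathcal{D}_{n}}=\partial_{x^{0}}-\sum_{j=1}^{n}e_{j}\partial_{x^{j}}$ and using the identity $\overline{\mathcal{D}_{n}}\mathcal{D}_{n}=\Delta_{n+1}$ (with $\Delta_{n+1}$ acting componentwise), one gets $\Delta_{n+1}u=0$ in $\mathcal{D}'(\Omega)$; by Weyl's lemma each real component of $u$ is harmonic, so $u\in C^{\infty}(\Omega)$, and then $\mathcal{D}_{n}u$ is a continuous function that vanishes as a distribution, hence vanishes pointwise. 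Thus $u$ is monogenic in $\Omega$. (Equivalently, $\mathcal{D}_{n}$ is hypoelliptic because $\overline{\mathcal{D}_{n}}\mathcal{D}_{n}$ is elliptic.)

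So it suffices to prove $\sum_{j=0}^{n}e_{j}\int_{\Omega}u\,\partial_{j}\varphi\,dV=0$ for every scalar $\varphi\in C_{c}^{\infty}(\Omega)$ (the $\mathcal{C}\ell(n)$-valued test functions being handled componentwise). Fix $\eta>0$. Since $\mathcal{H}^{n+\nu}(\textbf{E})=0$ and $\textbf{E}$ is compact, I would cover $\textbf{E}$ by finitely many balls $B_{i}=B(y_{i},r_{i})$ with $\sum_{i}r_{i}^{\,n+\nu}<\eta$ and all $r_{i}\le 1$, and attach to this cover a cutoff $\psi_{\eta}\in C_{c}^{\infty}(\mathbb{R}^{n+1})$ with $0\le\psi_{\eta}\le 1$, $\psi_{\eta}\equiv 1$ on a neighbourhood of $\textbf{E}$, $\mathrm{supp}\,\psi_{\eta}\subset\bigcup_{i}2B_{i}$, and the scale-adapted bound $\int_{2B_{i}}|\nabla\psi_{\eta}|\,dV\le C\,r_{i}^{\,n}$ for each $i$. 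Then split $\varphi=(1-\psi_{\eta})\varphi+\psi_{\eta}\varphi$. The contribution of $(1-\psi_{\eta})\varphi$ is exactly $0$: it is a test function supported in $\Omega\setminus\textbf{E}$, where $u$ is smooth and monogenic, so an integration by parts gives $\sum_{j}e_{j}\int u\,\partial_{j}[(1-\psi_{\eta})\varphi]=-\int(\mathcal{D}_{n}u)(1-\psi_{\eta})\varphi=0$.

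For the contribution of $\psi_{\eta}\varphi$ I would expand $\partial_{j}(\psi_{\eta}\varphi)=\psi_{\eta}\partial_{j}\varphi+\varphi\,\partial_{j}\psi_{\eta}$. The term $\sum_{j}e_{j}\int u\,\psi_{\eta}\partial_{j}\varphi$ is $O(\eta)$ because $u\,\partial_{j}\varphi$ is bounded while $|\mathrm{supp}\,\psi_{\eta}|\le\sum_{i}|2B_{i}|\le C\sum_{i}r_{i}^{\,n+1}\le C\sum_{i}r_{i}^{\,n+\nu}<C\eta$ (here $0<\nu\le 1$ and $r_{i}\le 1$ are used). The decisive term $\sum_{j}e_{j}\int u\,\varphi\,\partial_{j}\psi_{\eta}$ is where the H\"older continuity of $u$ is exploited: on each ball $2B_{i}$ I replace $u$ by the constant $c_{i}:=u(y_{i})$. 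Since $|u(x)-c_{i}|\le M(2r_{i})^{\nu}$ on $2B_{i}$, the ``oscillation'' piece is bounded by $\sum_{i}M(2r_{i})^{\nu}\,\|\varphi\|_{\infty}\int_{2B_{i}}|\nabla\psi_{\eta}|\le C\sum_{i}r_{i}^{\,\nu}\,r_{i}^{\,n}=C\sum_{i}r_{i}^{\,n+\nu}<C\eta$, while the leftover ``constant'' piece $\sum_{j}e_{j}\int c_{i}\,\varphi\,\partial_{j}\psi_{\eta}$---recombined into a single integral against $\psi_{\eta}$, or integrated by parts to move $\partial_{j}$ back onto $\varphi$---is again $O(\eta)$ by the smallness of $|\mathrm{supp}\,\psi_{\eta}|$. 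Letting $\eta\to 0$ gives $\sum_{j}e_{j}\int u\,\partial_{j}\varphi=0$, which is what was needed.

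The hard part is the construction of the cutoff $\psi_{\eta}$ with the scale-adapted estimate $\int_{2B_{i}}|\nabla\psi_{\eta}|\le C r_{i}^{\,n}$: a naïve partition of unity subordinate to $\{2B_{i}\}$ need not satisfy it, since passing from $\{B_{i}\}$ to $\{2B_{i}\}$ can destroy bounded overlap. The remedy is to first refine the cover---ordering the balls by decreasing radius and invoking a Besicovitch/Vitali-type covering argument to control the relevant overlaps---before patching together standard bumps; this is the technical heart, and it is precisely the mechanism behind the classical Dolzhenko and Harvey--Polking removability theorems. An alternative route, closer to the machinery already set up above, would instead use the Whitney extension $\widetilde{u}$ of $u|_{\textbf{E}}$ from Theorem~\ref{ExWhitney}: the estimate $|\mathcal{D}_{n}\widetilde{u}(x)|\le C\,\dist(x,\textbf{E})^{\nu-1}$ together with $\mathcal{H}^{n+\nu}(\textbf{E})=0$ is exactly what forces the Teodorescu-type correction of $u-\widetilde{u}$ to be well defined and continuous, after which a Borel--Pompeiu / Cauchy integral representation over a ball containing $\textbf{E}$ exhibits $u$ there as a monogenic function.
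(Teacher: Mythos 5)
Note first that the paper does not actually prove this theorem; it is quoted from \cite{APB2007}, so your attempt has to be measured against the standard proofs of Dolzhenko--Harvey--Polking type removability. Your architecture is the right one: show $\mathcal{D}_{n}u=0$ in $\mathcal{D}'(\Omega)$ by testing against $\psi_{\eta}\varphi$ for a cutoff adapted to a cover of $\textbf{E}$ with $\sum_{i}r_{i}^{\,n+\nu}<\eta$, then upgrade to classical monogenicity via $\overline{\mathcal{D}_{n}}\mathcal{D}_{n}=\Delta_{n+1}$ and Weyl's lemma (that last step is correct), and your ``oscillation'' estimate is exactly where $u\in C^{0,\nu}$ and $\mathcal{H}^{n+\nu}(\textbf{E})=0$ are meant to combine.

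The gap is the ``constant piece.'' Writing $\partial_{j}\psi_{\eta}=\sum_{i}g_{i}\,\partial_{j}\theta_{i}$ with $\theta_{i}$ a bump on $2B_{i}$ and $|g_{i}|\le 1$, the leftover term is $\sum_{i}c_{i}\sum_{j}e_{j}\int\varphi\,g_{i}\,\partial_{j}\theta_{i}$, whose natural bound is $C\sum_{i}\int_{2B_{i}}|\nabla\theta_{i}|\le C\sum_{i}r_{i}^{\,n}$. This is \emph{not} controlled by the hypothesis: $\mathcal{H}^{n+\nu}(\textbf{E})=0$ gives $\sum_{i}r_{i}^{\,n+\nu}<\eta$ and hence $\sum_{i}r_{i}^{\,n+1}<\eta$ (which is the ``smallness of $|\mathrm{supp}\,\psi_{\eta}|$'' you invoke), but says nothing about $\sum_{i}r_{i}^{\,n}$, which in general diverges as the cover is refined. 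So ``again $O(\eta)$ by the smallness of the support'' is false as stated: this term sits one power of $r_{i}$ below the support measure. Neither of your proposed rescues closes it --- you cannot recombine $\sum_{i}c_{i}g_{i}\partial_{j}\theta_{i}$ into a multiple of $\partial_{j}\psi_{\eta}$ because the constants $c_{i}$ vary with $i$, and moving $\partial_{j}$ onto $\varphi$ leaves $\int\theta_{i}\varphi\,\partial_{j}g_{i}$, which has the same $\sum r_{k}^{\,n}$-type size. What saves the classical proofs is an \emph{exact cancellation}, not a size bound: in the Dolzhenko/Morera version one covers $\textbf{E}$ by pairwise disjoint maximal dyadic cubes $Q_{i}$, so that $\int_{\partial Q_{i}}\nu(y)\,dS(y)=0$ annihilates the constant term on each cube identically and only the oscillation $\le C\ell_{i}^{\nu}\ell_{i}^{n}=C\ell_{i}^{n+\nu}$ survives; equivalently, with disjoint supports $g_{i}\equiv 1$ and $\int\partial_{j}\theta_{i}\,dV=0$. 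Arranging that disjointness (or, in the Whitney/Teodorescu alternative you sketch, proving $\int_{\mathrm{supp}\,\psi_{\eta}}\dist(x,\textbf{E})^{\nu-1}dV\to 0$, which is likewise not automatic from $\mathcal{H}^{n+\nu}(\textbf{E})=0$) is the actual content of the proof and is missing here. You do sense that the covering construction is ``the technical heart,'' but you locate the difficulty in the bound $\int_{2B_{i}}|\nabla\psi_{\eta}|\le Cr_{i}^{\,n}$ --- which the product construction $\psi_{\eta}=1-\prod_{i}(1-\theta_{i})$ already delivers in the form the argument needs --- rather than in the uncancelled constant term, which is where the proof genuinely breaks.
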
  

\section{Reduction Procedure for the Riemann Boundary Value Problem}\label{SecReductionProcedure}
The reduction procedure for the Riemann boundary value problem relative to the elements $e_1$ and $e_n$ was developed in \cite{TAB21}. In this section, we extend that idea in order to present a reduction procedure relative to any fixed element $e_{i_{0}}$, where $i_{0} \in \left\lbrace 1, 2, \dots, n \right\rbrace$.

Let $\mathcal{S}$ be a topologically compact surface that is the boundary of a Jordan domain $\Omega \subset \mathbb{R}^{n}$. Let us introduce the temporary notation $\Omega^+:=\Omega$, and $\Omega^-:=\R^n\setminus\{\Omega\cup \mathcal{S}\}$. The Riemann boundary value problem in vectorial Clifford analysis can be presented as follows: Let two $\mathcal{C}\ell(n)-$valued functions $G, g \in C^{0,\nu}(\mathcal{S})$. To find a function $\Phi$ monogenic on $\R^n\setminus \mathcal{S}$ continuously extendable from $\Omega^{\pm}$ to $\mathcal{S}$ such that the following condition of their boundary values $\Phi^{\pm}$ on $\mathcal{S}$ satisfy
\begin{eqnarray}\label{ProbOrig}
	\Phi^{+}(\ux) - G(\ux)\Phi^{-}(\ux) = g(\ux), & \ux\in \mathcal{S}.
\end{eqnarray}
with $G(\ux) \neq 0$. In general, we assume $\Phi(\infty) = c$, with $c$ being a constant.

Let us consider the algebra isomorphism
\begin{equation*}
	\begin{array}{cccc}
		\beta_{i_{0}}: & \mathcal{C}\ell(n)^{+}   & \rightarrow &  \mathcal{C}\ell(n-1)\\
		&     e_{i_{0}}e_{i}      & \rightarrow & \left\lbrace \begin{array}{cc}
			e_{i} & i < i_{0}	\\
			e_{i - 1} & i > i_{0}	
		\end{array} \right. 
	\end{array} ,
\end{equation*}
as well as the isomorphism of vector space 
\begin{equation*}
	\begin{array}{cccc}
		\alpha_{i_{0}}: & \mathcal{C}\ell(n)^{(1)}  & \rightarrow & \mathcal{C}\ell(n-1)^{(0)}\oplus\mathcal{C}\ell(n-1)^{(1)}   \\ 
		& \sum_{i = 1}^{n}x^{i}e_{i} & \rightarrow  & x^{i_{0}} + \sum_{i = 1}^{i_{0}}x^{i}e_{i} + \sum_{i = i_{0} + 1}^{n}x^{i - 1}e_{i - 1}.  
	\end{array} 
\end{equation*}
If $u: \mathcal{C}\ell(n)^{(1)} \rightarrow \mathcal{C}\ell(n)$ the following decomposition holds
\begin{equation*}
	u = u_{0} + e_{i_{0}}u_{1},
\end{equation*}
where $u_{0}, u_{1}: \mathcal{C}\ell(n)^{(1)} \rightarrow \mathcal{C}\ell(n)^{+}$ are its even and odd parts, respectively.

Hereafter, for a given function
\begin{equation*}
	\begin{array}{cccc}
		u: & \mathcal{C}\ell(n)^{(1)}  & \rightarrow & \mathcal{C}\ell(n)^{+},   \\ 
	\end{array} 
\end{equation*}
we define
\begin{equation*}
	\begin{array}{cccc}
		\widehat{u} : & \mathcal{C}\ell(n-1)^{(0)}\oplus\mathcal{C}\ell(n-1)^{(1)}   & \rightarrow & \mathcal{C}\ell(n-1),   \\ 
	\end{array} 
\end{equation*}
by $\widehat{u}(y) := \beta_{i_{0}} \circ u \circ \alpha_{i_{0}} (\ux) = \beta_{i_{0}}(u(\alpha_{i_{0}}(\ux)))$, and $y = \alpha_{i_{0}}(\ux)$.

Using the decomposition \eqref{eo} to the functions involved in (\ref{ProbOrig}), we arrive at the system
\begin{equation}\label{System}
	\Big\lbrace \begin{array}{c}
		\Phi_{0}^{+}(\ux) -(G_{0}(\ux)\Phi_{0}^{-}(\ux) - G^{*}_{1}(\ux)\Phi_{1}^{-}(\ux)) = g_{0}(\ux), \\
		\Phi_{1}^{+}(\ux) - (G_{1}(\ux)\Phi_{0}^{-}(\ux) + G^{*}_{0}(\ux)\Phi_{1}^{-}(\ux))= g_{1}(\ux).
	\end{array}
\end{equation}
where $G^{*}_{j}(\ux) = -e_{i_{0}}G_{j}(\ux)e_{i_{0}} \in \mathcal{C}\ell(n)^{+}, j = 0, 1$.

Making use of the transformations $\beta_{i_{0}}$ and $\alpha_{i_{0}}$ the system (\ref{System}) becomes 
\begin{equation}\label{RBVP}
	\begin{array}{ccccc}
		\begin{pmatrix}
			\widehat{\Phi}_{0}^{+}(y)\\ 
			\widehat{\Phi}_{1}^{+}(y)
		\end{pmatrix} & - & \begin{pmatrix}
			\widehat{G}_{0}(y) & -\widehat{G}^{*}_{1}(y) \\ 
			\widehat{G}_{1}(y) & \widehat{G}^{*}_{0}(y)
		\end{pmatrix}\begin{pmatrix}
			\widehat{\Phi}_{0}^{-}(y)\\ 
			\widehat{\Phi}_{1}^{-}(y)
		\end{pmatrix}  & = & \begin{pmatrix}
			\widehat{g}_{0}(y)\\ 
			\widehat{g}_{1}(y)
		\end{pmatrix}.
	\end{array} 
\end{equation}
We shall rewrite the Dirac operator $\partial_{n}$ in the following form
\begin{equation*}
	\partial_{n} := \sum_{i = 1}^{n}e_{i}\dfrac{\partial}{\partial x^{i}} = e_{i_{0}}(\dfrac{\partial}{\partial x^{i_{0}}} - \sum_{i = 1}^{i_{0}-1}e_{i_{0}}e_{i}\dfrac{\partial}{\partial x^{i}} -\sum_{i = i_{0}+1}^{n}e_{i_{0}}e_{i}\dfrac{\partial}{\partial x^{i}}) =: e_{i_{0}}\overline{\mathcal{D}_{n - 1}^{'}},
\end{equation*}
\begin{equation*}
	\partial_{n} := \sum_{i = 1}^{n}e_{i}\dfrac{\partial}{\partial x^{i}} = (\dfrac{\partial}{\partial x^{i_{0}}} + \sum_{i = 1}^{i_{0}-1}e_{i_{0}}e_{i}\dfrac{\partial}{\partial x^{i}} + \sum_{i = i_{0}+1}^{n}e_{i_{0}}e_{i}\dfrac{\partial}{\partial x^{i}})e_{i_{0}} =: \mathcal{D}_{n - 1}^{'} e_{i_{0}}.
\end{equation*}
Observe that
\begin{equation*}
	\partial_{n}\Phi = e_{i_{0}}\overline{\mathcal{D}_{n - 1}^{'}}(\Phi_{0}) - \mathcal{D}_{n - 1}^{'}(\Phi_{1}) = - \mathcal{D}_{n - 1}^{'}(\Phi_{1}) +  e_{i_{0}}\overline{\mathcal{D}_{n - 1}^{'}}(\Phi_{0}).
\end{equation*}
Therefore, we get the following equivalence 
\begin{equation}\label{SistMon}
	\begin{array}{ccc}
		\partial_{n}\Phi = 0 & \Leftrightarrow  & \left\lbrace  \begin{array}{c}
			\mathcal{D}_{n - 1}^{'}(\Phi_{1}) = 0\\ 
			\overline{\mathcal{D}_{n - 1}^{'}}(\Phi_{0}) = 0
		\end{array} \right. .
	\end{array} 
\end{equation}
Additionally, we have
\begin{equation*}
	\mathcal{D}_{n - 1} = \beta_{i_{0}}(\mathcal{D}_{n - 1}^{'}) = \dfrac{\partial}{\partial x^{i_{0}}} +  \sum_{i = 1}^{i_{0}-1}e_{i}\dfrac{\partial}{\partial x^{i}} + \sum_{i = i_{0}}^{n -1}e_{i}\dfrac{\partial}{\partial x^{i + 1}} = \partial_{i_{0}} + \partial_{n - 1},
\end{equation*}
and
\begin{equation*}
	\overline{\mathcal{D}_{n - 1}} = \beta_{i_{0}}(\overline{\mathcal{D}_{n - 1}^{'}}) = \dfrac{\partial}{\partial x^{i_{0}}} - \sum_{i = 1}^{i_{0}-1}e_{i}\dfrac{\partial}{\partial x^{i}} - \sum_{i = i_{0}}^{n -1}e_{i}\dfrac{\partial}{\partial x^{i + 1}} = \partial_{i_{0}} - \partial_{n - 1}.
\end{equation*}
Hence, the system ($\ref{SistMon}$) becomes
\begin{equation*}
	\left\lbrace  \begin{array}{c}
		\mathcal{D}_{n - 1}(\widehat{\Phi}_{1}) = 0\\ 
		\overline{\mathcal{D}_{n - 1}}(\widehat{\Phi}_{0}) = 0.
	\end{array} \right. 
\end{equation*}
Thus, the even and odd parts of $\widehat{\Phi}$ are antimonogenic and monogenic functions, respectively.  

We conclude this section with the following theorem.
\begin{theorem}
	A function $\Phi(\ux)$ is monogenic in the vectorial sense if and only if its even and odd parts are, through isomorphism, antimonogenic and monogenic, respectively in the paravectorial sense, and is decomposed in the form
	\begin{equation*}
		\Phi(\ux) = \beta_{i_{0}}^{-1}(\widehat{\Phi}_{0}(\ux)) + e_{i_{0}}\beta_{i_{0}}^{-1}(\widehat{\Phi}_{1}(\ux)),
	\end{equation*}
	where $\ux = \alpha_{i_{0}}^{-1}(y).$
\end{theorem}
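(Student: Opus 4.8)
The plan is to prove the two implications of the biconditional separately, relying entirely on the computations already carried out in this section, and then to verify the decomposition formula by unwinding the definitions of the isomorphisms $\beta_{i_0}$ and $\alpha_{i_0}$.

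First I would establish the forward direction. Suppose $\Phi(\ux)$ is monogenic in the vectorial sense, i.e. $\partial_n \Phi = 0$. Writing $\Phi = \Phi_0 + e_{i_0}\Phi_1$ with $\Phi_0,\Phi_1$ valued in $\mathcal{C}\ell(n)^{+}$, the identity $\partial_n\Phi = -\mathcal{D}_{n-1}^{'}(\Phi_1) + e_{i_0}\overline{\mathcal{D}_{n-1}^{'}}(\Phi_0)$ displayed above, together with the fact that $\mathcal{C}\ell(n) = \mathcal{C}\ell(n)^{+}\oplus e_{i_0}\mathcal{C}\ell(n)^{+}$ is a direct sum (so the even and odd components vanish independently), yields exactly the system \eqref{SistMon}: $\mathcal{D}_{n-1}^{'}(\Phi_1) = 0$ and $\overline{\mathcal{D}_{n-1}^{'}}(\Phi_0) = 0$. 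Now I apply the algebra isomorphism $\beta_{i_0}\colon \mathcal{C}\ell(n)^{+}\to\mathcal{C}\ell(n-1)$ and the change of variables $y = \alpha_{i_0}(\ux)$. Since, as recorded above, $\beta_{i_0}(\mathcal{D}_{n-1}^{'}) = \mathcal{D}_{n-1}$ and $\beta_{i_0}(\overline{\mathcal{D}_{n-1}^{'}}) = \overline{\mathcal{D}_{n-1}}$, and since $\beta_{i_0}$ is an algebra isomorphism (hence intertwines the operator with its transform under conjugation of the variable), I conclude $\mathcal{D}_{n-1}(\widehat{\Phi}_1) = 0$ and $\overline{\mathcal{D}_{n-1}}(\widehat{\Phi}_0) = 0$; that is, $\widehat{\Phi}_1$ is monogenic and $\widehat{\Phi}_0$ is antimonogenic in the paravectorial sense.

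For the converse, I would simply reverse this chain: assuming $\mathcal{D}_{n-1}(\widehat{\Phi}_1) = 0$ and $\overline{\mathcal{D}_{n-1}}(\widehat{\Phi}_0) = 0$, apply $\beta_{i_0}^{-1}$ and the inverse change of variables to recover \eqref{SistMon}, and then use \eqref{SistMon} to conclude $\partial_n\Phi = 0$. The decomposition formula $\Phi(\ux) = \beta_{i_0}^{-1}(\widehat{\Phi}_0(\ux)) + e_{i_0}\beta_{i_0}^{-1}(\widehat{\Phi}_1(\ux))$ with $\ux = \alpha_{i_0}^{-1}(y)$ is then just a restatement of the definition $\widehat{u}(y) = \beta_{i_0}(u(\alpha_{i_0}(\ux)))$ applied to $u = \Phi_0$ and $u = \Phi_1$, combined with $\Phi = \Phi_0 + e_{i_0}\Phi_1$; I would spell this out in one or two lines to close the argument.

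The step I expect to require the most care is the claim that $\beta_{i_0}$ truly intertwines $\mathcal{D}_{n-1}^{'}$ with $\mathcal{D}_{n-1}$ once the variable change $\alpha_{i_0}$ is taken into account — i.e. that the chain rule for $\partial/\partial x^i$ under $\alpha_{i_0}$ matches the reindexing of basis vectors performed by $\beta_{i_0}$. This is a bookkeeping verification comparing the index shift $i\mapsto i-1$ for $i>i_0$ on the coordinate side with the corresponding shift on the $e_i$ side, and it is exactly what is encoded in the displayed formulas for $\mathcal{D}_{n-1} = \partial_{i_0} + \partial_{n-1}$ and $\overline{\mathcal{D}_{n-1}} = \partial_{i_0} - \partial_{n-1}$; everything else is a direct consequence of the direct-sum decomposition \eqref{eo} and the equivalence \eqref{SistMon} already proved.
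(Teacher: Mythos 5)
Your proposal is correct and follows essentially the same route as the paper: the paper's own justification for this theorem is precisely the preceding computation, namely the factorizations $\partial_n = e_{i_0}\overline{\mathcal{D}_{n-1}^{'}} = \mathcal{D}_{n-1}^{'}e_{i_0}$, the splitting of $\partial_n\Phi = 0$ into the system \eqref{SistMon} via the direct sum $\mathcal{C}\ell(n)=\mathcal{C}\ell(n)^{+}\oplus e_{i_0}\mathcal{C}\ell(n)^{+}$, and the transport of that system through $\beta_{i_0}$ and $\alpha_{i_0}$. Your explicit attention to the intertwining of $\mathcal{D}_{n-1}^{'}$ with $\mathcal{D}_{n-1}$ under the change of variables is the only point where you are slightly more careful than the text, and it is consistent with what the paper records.
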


\section{Applications on fractal boundaries}\label{SecApplicationsFractal}
In this section, we will apply the method developed in Section \ref{SecReductionProcedure} to domains with fractal boundaries.

Due to the fact that $\widehat{\Phi}_{0}(y)$ is a left antimonogenic function, then $\Upsilon_{0}(y):= \overline{\widehat{\Phi}_{0}(y)}$ is a right monogenic function. For similarity we set $\Upsilon_{1} = \widehat{\Phi}_{1}$. Thus, problem (\ref{RBVP}) reduces to find 
\begin{equation*}
	\begin{pmatrix}
		\Upsilon_{0}(y)\\ 
		\Upsilon_{1}(y)
	\end{pmatrix},
\end{equation*}
such that on $\mathbb{R}^{n}\setminus \mathcal{S}$
\begin{equation*}
	\left\lbrace  \begin{array}{c}
		(\Upsilon_{0})\mathcal{D}_{n - 1} = 0\\ 
		\mathcal{D}_{n - 1}(\Upsilon_{1}) = 0,
	\end{array} \right. 
\end{equation*}
while on $\mathcal{S}$ the boundary condition
\begin{equation}\label{CondComp2}
	\begin{array}{ccccc}
		\begin{pmatrix}
			\overline{\Upsilon_{0}^{+}(y)}\\ 
			\Upsilon_{1}^{+}(y)
		\end{pmatrix} & - & \begin{pmatrix}
			\widehat{G}_{0}(y) & -\widehat{G}^{*}_{1}(y) \\ 
			\widehat{G}_{1}(y) & \widehat{G}^{*}_{0}(y)
		\end{pmatrix}\begin{pmatrix}
			\overline{\Upsilon_{0}^{-}(y)}\\ 
			\Upsilon_{1}^{-}(y)
		\end{pmatrix}  & = & \begin{pmatrix}
			\widehat{g}_{0}(y)\\ 
			\widehat{g}_{1}(y)
		\end{pmatrix}
	\end{array} 
\end{equation}
is satisfied.

Whenever this problem is solvable then so it is ($\ref{ProbOrig}$) and the explicit solution is given by
\begin{equation*}
	\Phi(\ux) = \beta_{i_{0}}^{-1}\circ\overline{\Upsilon_{0}}\circ\alpha_{i_{0}}^{-1}(y) + e_{i_{0}}\beta_{i_{0}}^{-1}\circ\Upsilon_{1}\circ\alpha_{i_{0}}^{-1}(y) = 
\end{equation*}
\begin{equation}\label{Ch3EqSolCuater2nd}
	= \beta_{i_{0}}^{-1}(\overline{\Upsilon_{0}}(\ux)) + e_{i_{0}}\beta_{i_{0}}^{-1}(\Upsilon_{1}(\ux)),
\end{equation}
where
\begin{equation*}
	\ux = \alpha_{i_{0}}^{-1}(y).
\end{equation*}
\subsection{Conditions in the vectorial approach through the paravectorial}
The following theorems were proved in \cite{TamayoCastro2022} in the context of the paravectorial Clifford analysis, i.e. when studying functions $f: \mathbb{R}^{n+1}\rightarrow \mathcal{C}\ell(n)$.
\begin{theorem}\label{TheoSolvCond}
	Let $\mathcal{S}$ be a topologically compact surface which is the boundary of a Jordan domain in $\mathbb{R}^{n +1}$,  and let $f \in C^{0, \nu}(\mathcal{S})$. If 
	\begin{equation*}
		\nu > 1 - \dfrac{\mathfrak{m}(\mathcal{S})}{n + 1},
	\end{equation*}
	and the scalar additive Jump problem is solvable.
\end{theorem}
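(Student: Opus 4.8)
The plan is to solve the scalar additive Jump problem --- that is, problem $(\ref{ProbOrig})$ with $G\equiv 1$: to find $\Phi$, left monogenic on $\mathbb{R}^{n+1}\setminus\mathcal{S}$, continuously extendable from $\Omega^{\pm}$ to $\mathcal{S}$, with $\Phi^{+}-\Phi^{-}=f$ on $\mathcal{S}$ and $\Phi(\infty)=0$ --- by the classical Kats-type device of monogenically correcting a Whitney extension of $f$. First I would apply Theorem \ref{ExWhitney} to obtain $\widetilde{f}\in C^{0,\nu}(\mathbb{R}^{n+1})$ with $\widetilde{f}|_{\mathcal{S}}=f$, $\widetilde{f}\in C^{\infty}(\mathbb{R}^{n+1}\setminus\mathcal{S})$ and $|\mathcal{D}_{n}\widetilde{f}(x)|\le C\,\dist(x,\mathcal{S})^{\nu-1}$ for $x\notin\mathcal{S}$. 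Assuming for the moment that $\mathfrak{m}(\mathcal{S})=\mathfrak{m}^{+}(\mathcal{S})$ and setting $h:=\mathcal{D}_{n}\widetilde{f}$ on $\Omega^{+}$, the candidate is
\[
\Phi(x):=\begin{cases}\widetilde{f}(x)-(T^{l}_{\Omega^{+}}h)(x), & x\in\Omega^{+},\\ -(T^{l}_{\Omega^{+}}h)(x), & x\in\Omega^{-}.\end{cases}
\]

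Second, the analytic core is to make sense of $T^{l}_{\Omega^{+}}h$ and prove it is continuous up to $\mathcal{S}$. From $|h(y)|\le C\,\dist(y,\mathcal{S})^{\nu-1}$, the hypothesis $\nu>1-\mathfrak{m}(\mathcal{S})/(n+1)$ is exactly what lets one pick an exponent $p$ with $n+1<p<\mathfrak{m}^{+}(\mathcal{S})/(1-\nu)$; for such $p$ one has $p(1-\nu)<\mathfrak{m}^{+}(\mathcal{S})$, hence $I_{p(1-\nu)}(\Omega^{+})<\infty$ by Definition \ref{DefMarcExp}, and together with the Whitney bound this gives $h\in L^{p}(\Omega^{+})$. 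Since $\Omega^{+}$ is bounded and $|E_{n}(y-x)|\le C|y-x|^{-n}$ with $np'<n+1$ (because $p>n+1$), the kernel $E_{n}(\cdot-x)$ lies in $L^{p'}(\Omega^{+})$ uniformly in $x$, so $T^{l}_{\Omega^{+}}$ extends by Hölder's inequality to $L^{p}(\Omega^{+})$ and maps it into bounded functions on $\mathbb{R}^{n+1}$; the standard estimate of $\|E_{n}(\cdot-x)-E_{n}(\cdot-x')\|_{L^{p'}(\Omega^{+})}$ then gives $T^{l}_{\Omega^{+}}h\in C^{0,1-(n+1)/p}(\mathbb{R}^{n+1})$, so $T^{l}_{\Omega^{+}}h$ has a single continuous extension to $\mathcal{S}$ and carries no jump across it.

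Third, I would verify that $\Phi$ solves the problem. On $\Omega^{-}$ one differentiates under the integral, using that $E_{n}(y-\cdot)$ is left monogenic away from $y$, to get $\mathcal{D}_{n}\Phi=0$; on $\Omega^{+}$, since $\widetilde{f}\in C^{\infty}(\Omega^{+})$ and $\mathcal{D}_{n}T^{l}_{\Omega^{+}}=\mathrm{id}$ on $\Omega^{+}$ (the right-inverse property of the Teodorescu transform, which persists for its $L^{p}$ extension; see \cite{GHS08}), one gets $\mathcal{D}_{n}\Phi=\mathcal{D}_{n}\widetilde{f}-h=0$. Both terms are continuous on $\mathbb{R}^{n+1}$, so the one-sided limits $\Phi^{\pm}$ exist and
\[
\Phi^{+}-\Phi^{-}=\bigl(\widetilde{f}|_{\mathcal{S}}-T^{l}_{\Omega^{+}}h|_{\mathcal{S}}\bigr)-\bigl(-T^{l}_{\Omega^{+}}h|_{\mathcal{S}}\bigr)=\widetilde{f}|_{\mathcal{S}}=f ;
\]
moreover $(T^{l}_{\Omega^{+}}h)(x)\to 0$ as $|x|\to\infty$, because $h\in L^{1}(\Omega^{+})$ and $\Omega^{+}$ is bounded, so $\Phi(\infty)=0$. (Uniqueness under this normalization, not claimed in the statement, would follow along standard lines from the Dolzhenko-type Theorem \ref{CorrDolz}.)

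The step I expect to demand the most care is the remaining case $\mathfrak{m}(\mathcal{S})=\mathfrak{m}^{-}(\mathcal{S})>\mathfrak{m}^{+}(\mathcal{S})$: here the construction must be run on the truncated exterior domain $\Omega^{*}=\Omega^{-}\cap\{|x|<r\}$ of Definition \ref{DefMarcExp}, after replacing $\widetilde{f}$ by $\chi\widetilde{f}$ for a cutoff $\chi$ equal to $1$ near $\mathcal{S}$ and supported in $\{|x|<r\}$, so that the Whitney bound is used only near $\mathcal{S}$ (where $\chi\equiv 1$) while the bounded extra term coming from $\nabla\chi$ is harmless; one then checks that the auxiliary sphere $\{|x|=r\}$ produces no jump and that $\Phi(\infty)=0$ still holds. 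Apart from this bookkeeping, the essential difficulty is the $L^{p}\to C^{0,\alpha}$ mapping property of $T^{l}_{\Omega^{+}}$ for $p>n+1$ on bounded domains, combined with the finiteness of $I_{p(1-\nu)}(\Omega^{+})$ furnished by the Marcinkiewicz-exponent hypothesis; the remaining verifications are routine.
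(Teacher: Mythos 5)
Your construction---a Whitney extension $\widetilde f$ of $f$, the bound $|\mathcal{D}_n\widetilde f(x)|\le C\,\dist(x,\mathcal{S})^{\nu-1}$ combined with the Marcinkiewicz-exponent hypothesis to place $\mathcal{D}_n\widetilde f$ in $L^p$ for some $p>n+1$ via finiteness of $I_{p(1-\nu)}$, and the Teodorescu-transform correction of $\widetilde f\chi^{+}$ (or of the cutoff version on $\Omega^{*}$ when $\mathfrak{m}^{-}$ realizes the maximum)---is precisely the proof the paper imports from \cite{TamayoCastro2022}, as the explicit solution formulas $\Upsilon^{l}_{j}=\phi^{l}_{j}+\int E\,\mathcal{D}\phi^{l}_{j}\,dV$ with $\phi^{0}_{j}=u_{j}\chi^{+}$ and $\phi^{1}_{j}=u_{j}\chi^{-}\rho$ displayed after Theorem \ref{Ch3TheoSolvCond} make clear. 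The proposal is correct and takes essentially the same route.
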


\begin{theorem}\label{TheoUniMar}
	Let $\mathcal{S}$ be a topologically compact surface which is the boundary of a Jordan domain in $\mathbb{R}^{n + 1}$, and let $f \in C^{0, \nu}(\mathcal{S})$, with $\nu > 1 - \dfrac{\mathfrak{m}(\mathcal{S})}{n + 1}$ and 
	\begin{equation}\label{EqUniCndMar}
		\dim_{H}\mathcal{S} - n < \mu < 1 - \dfrac{(n + 1)(1 - \nu)}{\mathfrak{m}(\mathcal{S})}.
	\end{equation}
	Then the solution to the scalar additive Jump problem is unique in the classes $C^{0, \mu}(\overline{G^{+}})$ and $C^{0, \mu}(\overline{G^{-}})$.
\end{theorem}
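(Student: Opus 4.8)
The plan is to split Theorem~\ref{TheoUniMar} into a \emph{uniqueness} part, which will rely only on the left inequality $\mu>\dim_H\mathcal{S}-n$, and an \emph{existence-in-the-class} part, which is where the right inequality in \eqref{EqUniCndMar} enters. For uniqueness, suppose $\Phi_1,\Phi_2$ both solve the scalar additive Jump problem with data $f$, with $\Phi_j\in C^{0,\mu}(\overline{G^{+}})$, $\Phi_j\in C^{0,\mu}(\overline{G^{-}})$ and the same prescribed value at infinity, and set $\Phi:=\Phi_1-\Phi_2$. Then $\Phi$ is monogenic on $\R^{n+1}\setminus\mathcal{S}$, it vanishes at infinity, and on $\mathcal{S}$ the jumps cancel, so $\Phi^{+}=\Phi^{-}$. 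Hence $\Phi$ defines a single continuous function on $\R^{n+1}$; since it is $\mu$-Hölder on $\overline{G^{+}}$ and on $\overline{G^{-}}$ and the two pieces agree along $\mathcal{S}$, a routine gluing argument (for $x\in G^{+}$, $y\in G^{-}$ pick $z\in\mathcal{S}\cap[x,y]$ and use $|x-z|,|z-y|\leq|x-y|$) shows $\Phi\in C^{0,\mu}(\overline{\Omega})$ for any bounded domain $\Omega\supset\mathcal{S}$, with $\Phi$ monogenic in $\Omega\setminus\mathcal{S}$.

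Next I would invoke the Dolzhenko type removability theorem, Theorem~\ref{CorrDolz}. Its hypothesis $\mathcal{H}^{n+\mu}(\mathcal{S})=0$ holds precisely because, by Definition~\ref{DefHausDim}, $n+\mu>\dim_H\mathcal{S}$ forces $\mathcal{H}^{n+\mu}(\mathcal{S})=0$, and $n+\mu>\dim_H\mathcal{S}$ is exactly the left inequality in \eqref{EqUniCndMar} (note also $0<\mu<1$ there, so the theorem applies). Therefore $\Phi$ is monogenic on all of $\Omega$, hence on all of $\R^{n+1}$. Being continuous on $\R^{n+1}$ and vanishing at infinity, $\Phi$ is bounded, so a Liouville type theorem for entire monogenic functions (each component is harmonic and bounded on $\R^{n+1}$, hence constant, and the constant is the limit at infinity) yields $\Phi\equiv 0$, i.e.\ $\Phi_1=\Phi_2$. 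This settles uniqueness; the normalization at infinity is essential, for otherwise $\Phi$ would only be forced to be constant.

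It remains to produce a solution that actually lies in $C^{0,\mu}(\overline{G^{+}})$ and $C^{0,\mu}(\overline{G^{-}})$ for $\mu$ in the range \eqref{EqUniCndMar}. Here I would revisit the solution furnished by Theorem~\ref{TheoSolvCond}: from a Whitney extension $\widetilde f\in C^{0,\nu}(\R^{n+1})$ of $f$ with $|\mathcal{D}_n\widetilde f(x)|\leq C\,\dist(x,\mathcal{S})^{\nu-1}$ (Theorem~\ref{ExWhitney}), one forms $\Phi=\widetilde f\,\chi_{G^{+}}-T^{l}_{G^{+}}(\mathcal{D}_n\widetilde f)$, the Teodorescu correction being well defined because $\nu>1-\mathfrak{m}(\mathcal{S})/(n+1)$ makes $I_p(G^{+})$ finite for some $p>(n+1)(1-\nu)$. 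The task is to prove that $T^{l}_{G^{+}}(\mathcal{D}_n\widetilde f)$ is $\mu$-Hölder up to $\mathcal{S}$: one estimates the difference $(T^{l}_{G^{+}}h)(x)-(T^{l}_{G^{+}}h)(y)$ by splitting $G^{+}$ into its intersection with a ball of radius $\sim|x-y|$ (where the pointwise bound on $|h|$ and an $L^{p}$--$L^{q}$ Hölder inequality against the kernel are used, with $\|h\|_{L^{p}(G^{+})}^{p}=I_{(1-\nu)p}(G^{+})<\infty$) and the complementary far part (where one differences the kernel); optimizing over the radius produces exactly the Hölder exponent permitted by \eqref{EqUniCndMar}, and the same computation on $\Omega^{*}$ handles $\overline{G^{-}}$. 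Combined with the uniqueness step this gives the theorem; the detailed estimates can be imported from \cite{TamayoCastro2022}.

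The step I expect to be the main obstacle is precisely this last Hölder estimate for the Teodorescu (Cauchy type) correction over a domain with fractal boundary: the constants in the dyadic decomposition and in the Hölder inequality must be tracked so that the exponent that emerges is the sharp value governed by $\mathfrak{m}(\mathcal{S})$ rather than a weaker one, and the bound must be uniform as $x,y$ approach the irregular surface $\mathcal{S}$. The uniqueness half, by contrast, is soft once Theorem~\ref{CorrDolz} and the Liouville property are available.
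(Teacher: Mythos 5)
The paper does not actually prove Theorem \ref{TheoUniMar}: it is quoted verbatim from \cite{TamayoCastro2022}, and your reconstruction is correct and follows the standard route used there. Namely, the left inequality in \eqref{EqUniCndMar} forces $\mathcal{H}^{n+\mu}(\mathcal{S})=0$, so the difference of two solutions in the stated H\"older classes glues to a $C^{0,\mu}$ function, extends monogenically across $\mathcal{S}$ by the Dolzhenko-type Theorem \ref{CorrDolz}, and vanishes by Liouville together with the normalization at infinity; the right inequality is what guarantees the Whitney--Teodorescu solution actually lies in $C^{0,\mu}$, exactly as you indicate.
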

\begin{remark}\label{RemarkRightMonValid}
	It is easy to check that Theorems \ref{TheoSolvCond} and \ref{TheoUniMar} remain valid when the scalar additive Jump problem is stated for the right monogenic functions.	Additionally, they are also valid in the context of vectorial Clifford analysis with the corresponding change of the value $n+1$ for $n$ in the inequalities.  As a matter of fact, those follow with analogous reasoning to that in \cite{TamayoCastro2022} by using the properties of the Teodorescu transform written in the vectorial sense, see  \cite{APB2007, GS1997}. 
\end{remark}
We shall use the reduction procedure for the RBVP to obtain unique solvability conditions in the vectorial approach through the paravectorial one. This method has proved to be more effective in the sense that we get more different expressions for the solutions.  

In the case of the scalar additive Jump problem $G(\ux) \equiv 1$, (\ref{ProbOrig}) becomes
\begin{eqnarray}\label{Ch3EqJumPro}
	\Phi^{+}(\ux) - \Phi^{-}(\ux) = g(\ux), & \ux\in \mathcal{S}.
\end{eqnarray}
Consequently, ($\ref{CondComp2}$) turns into 
\begin{equation*}
	\Big\lbrace \begin{array}{c}
		\overline{\Upsilon_{0}^{+}(\ux)} - \overline{\Upsilon_{0}^{-}(\ux)} = \widehat{g}_{0}(\ux) \\
		\Upsilon_{1}^{+}(\ux) -\Upsilon_{1}^{-}(\ux) = \widehat{g}_{1}(\ux),
	\end{array} 
\end{equation*}
which is equivalent to
\begin{equation}\label{Ch3EqEquivJumPro}
	\Big\lbrace \begin{array}{c}
		\Upsilon_{0}^{+}(\ux) - \Upsilon_{0}^{-}(\ux) = \overline{\widehat{g}_{0}(\ux)} \\
		\Upsilon_{1}^{+}(\ux) -\Upsilon_{1}^{-}(\ux) = \widehat{g}_{1}(\ux).
	\end{array} 
\end{equation}
These are two independent Jump problems in paravectorial Clifford analysis, the first one for right monogenic functions and the second for left monogenic ones. Moreover, problem (\ref{Ch3EqJumPro}) is solvable if and only if both problems in the system (\ref{Ch3EqEquivJumPro}) are solvable. 

If $g$ belongs to a H\"older class then its even and odd parts also are in H\"older class, which is enough for applications to domains with smooth boundaries. However, it is not the case for fractal domains, where to show a relation between the H\"older exponent of $g$ and those of its even and odd parts is required. These relations are established by our next theorem.

We write $\alpha$ for the maximum H\"older exponent of $g$ on a set $\textbf{E}\subset \mathbb{R}^{n + 1}$ requiring that $g \in  C^{0,\nu}(\textbf{E})$ for every $\nu \leq \alpha$, while $g \notin  C^{0,\mu}(\textbf{E})$ for $\mu > \alpha$, see \cite{Fiorenza2016}. However, as was also shown in \cite{Fiorenza2016}, the maximum H\"older exponent do not always exists, i.e. in some cases $g \in  C^{0,\nu}(\textbf{E})$ for every $\nu < \alpha$, while $g \notin  C^{0,\alpha}(\textbf{E})$. In order to contain both cases, we will introduce the concept of supremum of the H\"older exponents. 
\begin{definition}
	We say that $\alpha$ is the supremum of the H\"older exponents on $\textbf{E}$ if $g \in  C^{0,\nu}(\textbf{E})$ for every $\nu < \alpha$, while $g \notin  C^{0,\mu}(\textbf{E})$ for $\mu > \alpha$.
\end{definition} 
\begin{theorem}\label{Ch3TheoHoldPart}
	A function $g(\ux) \in  C^{0,\nu}(\mathcal{S})$, with $\alpha$ the supremum of the H\"older exponents on $\mathcal{S}$, if and only if its even and odd part $\widehat{g}_{0}(y) \in C^{0,\nu_{0}}(\mathcal{S})$ and $\widehat{g}_{1}(y) \in C^{0,\nu_{1}}(\mathcal{S})$, with $\alpha_{0}$ and $\alpha_{1}$ the supremum of the H\"older exponents on $\mathcal{S}$, respectively, and $\alpha = \min \left\lbrace \alpha_{0}, \alpha_{1}\right\rbrace$. 
\end{theorem}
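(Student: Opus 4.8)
The plan is to peel off the algebraic isomorphisms $\alpha_{i_{0}},\beta_{i_{0}}$ first, so that the claim reduces to a statement purely about the even/odd splitting $g=g_{0}+e_{i_{0}}g_{1}$, and then to exploit that both this splitting and its inverse are realized by \emph{bounded} $\mathbb{R}$-linear operations on the finite-dimensional algebra $\mathcal{C}\ell(n)$. Throughout, for a function $h$ on a bounded set and $\rho\in(0,1]$, write $[h]_{\rho}$ for its H\"older seminorm of exponent $\rho$; recall that on a bounded set $C^{0,\rho}\subseteq C^{0,\mu}$ whenever $\mu\le\rho$, so the set of admissible exponents of any fixed continuous function is an interval with left endpoint $0$ whose supremum is precisely its supremum of H\"older exponents.

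First I would note that $\alpha_{i_{0}}$ and $\beta_{i_{0}}$ are linear isomorphisms between finite-dimensional normed spaces, hence bi-Lipschitz homeomorphisms (in fact each permutes the relevant standard orthonormal basis up to sign, so they are isometries for the Euclidean norms). Therefore, for any $\rho$, a function $u$ lies in $C^{0,\rho}(\mathcal{S})$ if and only if $\widehat{u}=\beta_{i_{0}}\circ u\circ\alpha_{i_{0}}^{-1}$ lies in $C^{0,\rho}(\alpha_{i_{0}}(\mathcal{S}))$, the seminorms changing only by fixed multiplicative constants; in particular $u$ and $\widehat{u}$ share the same supremum of H\"older exponents. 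So it suffices to prove the theorem with $\widehat{g}_{0},\widehat{g}_{1}$ replaced by $g_{0},g_{1}$ and $\alpha_{0},\alpha_{1}$ the suprema of the H\"older exponents of $g_{0},g_{1}$.

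Next I would use that the even and odd projections $P_{j}\colon a\mapsto a_{j}$ are orthogonal with respect to the basis $\{e_{A}\}$, so $|P_{j}a|\le|a|$, while left multiplication by $e_{i_{0}}$ is a bijective isometry of $\mathcal{C}\ell(n)$, so $|e_{i_{0}}b|=|b|$. The first fact gives $[g_{j}]_{\rho}\le[g]_{\rho}$ for every $\rho$, hence $g\in C^{0,\nu}(\mathcal{S})$ forces $g_{0},g_{1}\in C^{0,\nu}(\mathcal{S})$, so $\alpha_{0},\alpha_{1}\ge\alpha$ and thus $\min\{\alpha_{0},\alpha_{1}\}\ge\alpha$. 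Combining $g=g_{0}+e_{i_{0}}g_{1}$ with the second fact gives $[g]_{\rho}\le[g_{0}]_{\rho}+[g_{1}]_{\rho}$; so for $\mu<\min\{\alpha_{0},\alpha_{1}\}$ both parts lie in $C^{0,\mu}(\mathcal{S})$ (by the nesting recalled above) and hence so does $g$, which yields $\alpha\ge\min\{\alpha_{0},\alpha_{1}\}$. Together these give $\alpha=\min\{\alpha_{0},\alpha_{1}\}$, and running the same two estimates in both directions establishes the stated equivalence: if $g\in C^{0,\nu}(\mathcal{S})$ then $g_{0},g_{1}$ are H\"older with suprema $\alpha_{0},\alpha_{1}$ and $\alpha=\min\{\alpha_{0},\alpha_{1}\}$; conversely, given $\widehat{g}_{0}\in C^{0,\nu_{0}}$ and $\widehat{g}_{1}\in C^{0,\nu_{1}}$ one recovers $g_{0},g_{1}$ by the previous step and reconstructs $g=g_{0}+e_{i_{0}}g_{1}\in C^{0,\min\{\nu_{0},\nu_{1}\}}$, while the contrapositive of $[g_{j}]_{\rho}\le[g]_{\rho}$ shows $g\notin C^{0,\mu}(\mathcal{S})$ for $\mu>\min\{\alpha_{0},\alpha_{1}\}$.

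The computations here are routine; the one genuinely delicate point is the bookkeeping at the endpoint exponent, i.e.\ reconciling the \emph{supremum} of H\"older exponents with the possibility, recalled just before the statement, that the maximum is not attained. I would arrange the argument, as above, to invoke only membership for exponents strictly below $\alpha$ and non-membership for exponents strictly above $\alpha$ --- which is exactly the content of that definition --- so that no claim about the borderline value $\alpha$ itself is ever needed; this is the step where I would be most careful to keep the quantifiers straight.
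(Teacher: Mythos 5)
Your proof is correct and follows essentially the same route as the paper's: the orthogonality bound $|g_j(\ux)-g_j(\uy)|\le|g(\ux)-g(\uy)|$ gives $\min\{\alpha_0,\alpha_1\}\ge\alpha$, the triangle inequality $[g]_{\rho}\le[g_0]_{\rho}+[g_1]_{\rho}$ gives the reverse, and the supremum is handled by arguing only strictly below and strictly above the critical exponent. The one point you make explicit that the paper leaves implicit is that $\alpha_{i_0}$ and $\beta_{i_0}$ are isometries, so that $g_j$ and $\widehat{g}_j$ have the same H\"older data; this is a correct and worthwhile clarification but not a different argument.
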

\begin{proof}
	First we shall show that if $\widehat{g}_{0}(\ux) \in C^{0,\nu_{0}}(\mathcal{S})$ and $\widehat{g}_{1}(\ux) \in C^{0,\nu_{1}}(\mathcal{S})$, with $\alpha_{0}$ and $\alpha_{1}$ the supremum of the H\"older exponents, respectively, and $\alpha = \min \left\lbrace \alpha_{0}, \alpha_{1}\right\rbrace$ then $g(\ux) \in C^{0,\nu}(\mathcal{S})$ with $\alpha$ the supremum of the H\"older exponents.
	Indeed, we have,
	\begin{equation}\label{Ch3EqLemHolDerivation}
		\begin{array}{cc}
			\rvert g(\ux) - g(\uy)\lvert = \rvert g_{0}(\ux) + e_{i_{0}}g_{1}(\ux) - g_{0}(\uy) + e_{i_{0}}g_{1}(\uy) \lvert = & \\ 
			\rvert \left[ g_{0}(\ux)-g_{0}(\uy)\right]  + e_{i_{0}}\left[ g_{1}(\ux) - g_{1}(\uy)\right]\lvert \leq & \\
			\rvert \left[ g_{0}(\ux)-g_{0}(\uy)\right]\lvert  + \rvert\left[ g_{1}(\ux) - g_{1}(\uy)\right]\lvert \leq & \\
			C_{0}\rvert\ux - \uy\lvert^{\nu_{0}} + C_{1}\rvert\ux - \uy\lvert^{\nu_{1}},  & \nu_{0} < \alpha_{0} \,\,\,\, \mathrm{and} \,\,\,\, \nu_{1} < \alpha_{1}, \\		
		\end{array}
	\end{equation}
	thus
	\begin{equation*}
		\begin{array}{cc} 
			\rvert g(\ux) - g(\uy)\lvert \leq C\rvert\ux - \uy\lvert^{\nu} & \nu < \alpha.		
		\end{array}
	\end{equation*}
	We will suppose that $g(\ux) \in C^{0,\mu}(\mathcal{S})$ for some $\mu > \alpha = \min \left\lbrace \alpha_{0}, \alpha_{1}\right\rbrace$. Without loss of generality, we may assume that $\alpha_{0} \leq \alpha_{1}$. We can proceed analogously if the opposite is supposed. Thus we obtain,
	\begin{eqnarray*}
		C\rvert\ux - \uy\lvert^{\mu} \geq \rvert g(\ux) - g(\uy)\lvert = \left(\sum_{A\subset N}[g_{A}(\ux) - g_{A}(\uy)]^{2} \right)^{\frac{1}{2}} \geq &\\
		\left(\sum_{\rvert A\lvert \, even}[g_{A}(\ux) - g_{A}(\uy)]^{2} \right)^{\frac{1}{2}} = \rvert g_{0}(\ux) - g_{0}(\uy)\lvert, & \mu > \alpha_{0}.
	\end{eqnarray*}
	That contradicts the fact that $\alpha_{0}$ is the supremum of the H\"older exponents of $g_{0}$. Therefore, $\alpha = \min \left\lbrace \alpha_{0}, \alpha_{1}\right\rbrace$ is the supremum of the H\"older exponents of $g$. \\
	Let us now show that if $g(\ux) \in C^{0,\nu}(\mathcal{S})$, with $\alpha$ the supremum of the H\"older exponents, then $\widehat{g}_{0}(\ux) \in C^{0,\nu_{0}}(\mathcal{S})$ and $\widehat{g}_{1}(\ux) \in C^{0,\nu_{1}}(\mathcal{S})$, with $\alpha_{0}$ and $\alpha_{1}$ the supremum of the H\"older exponents, respectively, and $\alpha = \min \left\lbrace \alpha_{0}, \alpha_{1}\right\rbrace$. Then we have,
	\begin{eqnarray*}
		C\rvert\ux - \uy\lvert^{\nu} \geq \rvert g(\ux) - g(\uy)\lvert \geq \rvert g_{0}(\ux) - g_{0}(\uy)\lvert, & \nu > \alpha.\\
	\end{eqnarray*}
	We have proved that if $g(\ux) \in C^{0,\nu}(\mathcal{S})$ then $\widehat{g}_{0}(\ux) \in C^{0,\nu}(\mathcal{S})$, both with $\nu > \alpha$. However, there could also happen that $\widehat{g}_{0}(\ux) \in C^{0,\nu}(\mathcal{S}) \subset C^{0,\nu_{0}}(\mathcal{S})$ with $\nu \leq \nu_{0}$ and $\alpha < \alpha_{0}$. An analogous reasoning can be done with $\widehat{g}_{1}(\ux)$, $\nu \leq \nu_{1}$ and $\alpha < \alpha_{1}$. We are going to show that $\alpha = \min \left\lbrace \alpha_{0}, \alpha_{1}\right\rbrace$. Assuming that $\alpha < \min \left\lbrace \alpha_{0}, \alpha_{1}\right\rbrace$. Using a similar analysis than in (\ref{Ch3EqLemHolDerivation}) we get that 
	\begin{eqnarray*}
		\rvert g(\ux) - g(\uy)\lvert \leq C\rvert\ux - \uy\lvert^{\nu}, & \nu < \min \left\lbrace \alpha_{0}, \alpha_{1}\right\rbrace,
	\end{eqnarray*}
	that contradict the fact that the supremum of the H\"older exponents of the function $g$, therefore, the supposition is false and $\alpha = \min \left\lbrace \alpha_{0}, \alpha_{1}\right\rbrace$. This completes the proof.
\end{proof}
Now, we are going to prove a sufficient solvability condition for the Jump problem in the vectorial approach.
\begin{theorem}\label{Ch3TheoSolvCond}
	Let $\mathcal{S}$ be a topologically compact surface which is the boundary of a Jordan domain in $\mathbb{R}^{n}$,  and let $f \in C^{0,\nu}(\mathcal{S})$. If 
	\begin{equation}\label{Ch3EqCndMar}
		\nu > 1 - \dfrac{\mathfrak{m}(\mathcal{S})}{n},
	\end{equation}
	then the scalar additive Jump Problem (\ref{Ch3EqJumPro}) is solvable.
\end{theorem}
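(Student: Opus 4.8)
The plan is to run the reduction procedure of Section~\ref{SecReductionProcedure} backwards: transfer the vectorial Jump problem to the two independent paravectorial Jump problems it decomposes into, solve each of those with the already-available paravectorial solvability criterion, and reassemble the solution. Writing $g\equiv f$ and putting $G(\ux)\equiv 1$, the discussion leading to~(\ref{Ch3EqEquivJumPro}) shows that~(\ref{Ch3EqJumPro}) is equivalent to the pair of \emph{independent} scalar Jump problems
\[
	\Upsilon_{0}^{+}(\ux)-\Upsilon_{0}^{-}(\ux)=\overline{\widehat{g}_{0}(\ux)},\qquad \Upsilon_{1}^{+}(\ux)-\Upsilon_{1}^{-}(\ux)=\widehat{g}_{1}(\ux),
\]
the first posed for right monogenic and the second for left monogenic functions in the paravectorial Clifford analysis over $\mathbb{R}^{n}$ (that is, $\mathcal{C}\ell(n-1)$-valued), with the surface still $\mathcal{S}$, since $\alpha_{i_{0}}$ merely relabels coordinates and hence preserves $\mathfrak{m}(\mathcal{S})$. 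By the discussion preceding~(\ref{Ch3EqEquivJumPro}), problem~(\ref{Ch3EqJumPro}) is solvable once both of these are, the solution being recovered from~(\ref{Ch3EqSolCuater2nd}), with monogenicity and two-sided continuous extendability of $\Phi$ guaranteed by the equivalence~(\ref{SistMon}) and the theorem closing Section~\ref{SecReductionProcedure}. So it suffices to solve each reduced problem.

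Next I would control the regularity of the reduced data. By Theorem~\ref{Ch3TheoHoldPart}, from $g\in C^{0,\nu}(\mathcal{S})$ one gets $\widehat{g}_{0}\in C^{0,\nu_{0}}(\mathcal{S})$ and $\widehat{g}_{1}\in C^{0,\nu_{1}}(\mathcal{S})$, whose suprema of H\"older exponents $\alpha_{0},\alpha_{1}$ satisfy $\min\{\alpha_{0},\alpha_{1}\}=\alpha$, the supremum of the H\"older exponents of $g$; in particular $\alpha_{0},\alpha_{1}\ge\alpha\ge\nu>1-\mathfrak{m}(\mathcal{S})/n$. Using the strictness of~(\ref{Ch3EqCndMar}), choose $\nu'$ with $1-\mathfrak{m}(\mathcal{S})/n<\nu'<\min\{\alpha_{0},\alpha_{1}\}$; then $\widehat{g}_{0},\widehat{g}_{1}\in C^{0,\nu'}(\mathcal{S})$, and since Clifford conjugation is a linear isometry of $\mathcal{C}\ell(n-1)$ also $\overline{\widehat{g}_{0}}\in C^{0,\nu'}(\mathcal{S})$.

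Then I would apply Theorem~\ref{TheoSolvCond} in the paravectorial setting over $\mathbb{R}^{n}$: replacing its parameter $n$ by $n-1$, the ambient dimension becomes $(n-1)+1=n$ and the solvability threshold reads $\nu'>1-\mathfrak{m}(\mathcal{S})/n$, which holds by the choice of $\nu'$; by Remark~\ref{RemarkRightMonValid} the statement is equally valid for right monogenic functions. Hence both reduced Jump problems in~(\ref{Ch3EqEquivJumPro}) are solvable, and inserting the resulting $\overline{\Upsilon_{0}}$ and $\Upsilon_{1}$ into~(\ref{Ch3EqSolCuater2nd}) yields a function $\Phi$ solving~(\ref{Ch3EqJumPro}), which finishes the argument.

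I expect the only real care to be in the bookkeeping rather than in any deep estimate: one must track the dimension shift $n\mapsto n-1$ through $\beta_{i_{0}},\alpha_{i_{0}}$ so that the paravectorial denominator is genuinely $n$ (and not $n+1$), and one must be sure the passage to even and odd parts does not drop the H\"older exponent below the threshold — which is exactly the content of Theorem~\ref{Ch3TheoHoldPart}, while the strict inequality in~(\ref{Ch3EqCndMar}) absorbs the endpoint ambiguity in the notion of supremum of H\"older exponents. (I note that the conclusion also follows at once from the vectorial instance recorded in Remark~\ref{RemarkRightMonValid}, but carrying it out through the reduction procedure is what fits the method of the paper and prepares the ground for the sharper uniqueness statements and the extra solution formulas discussed below.)
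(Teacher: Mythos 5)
Your proposal is correct and follows essentially the same route as the paper: reduce via the even/odd decomposition to the two independent paravectorial Jump problems in~(\ref{Ch3EqEquivJumPro}), control the H\"older exponents of $\widehat{g}_{0},\widehat{g}_{1}$ through Theorem~\ref{Ch3TheoHoldPart}, and invoke Theorem~\ref{TheoSolvCond} with Remark~\ref{RemarkRightMonValid} after the dimension shift $n+1\mapsto n$. Your explicit choice of an intermediate exponent $\nu'$ is in fact a slightly more careful handling of the endpoint than the paper's own wording, but the argument is the same.
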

\begin{proof}
	If $\widehat{g}_{0}(\ux) \in C^{0,\nu_{0}}(\mathcal{S})$ and $\widehat{g}_{1}(\ux) \in C^{0,\nu_{1}}(\mathcal{S})$ then from Theorem \ref{TheoSolvCond} and Remark \ref{RemarkRightMonValid} we know that the system (\ref{Ch3EqEquivJumPro}) is solvable if
	\begin{equation}\label{Ch3ConEquivJumPro1}
		\nu_{0} > 1 - \dfrac{\mathfrak{m}(\mathcal{S})}{(n - 1) + 1} = 1 - \dfrac{\mathfrak{m}(\mathcal{S})}{n},
	\end{equation}
	and
	\begin{equation}\label{Ch3ConEquivJumPro2}
		\nu_{1} > 1 - \dfrac{\mathfrak{m}(\mathcal{S})}{(n - 1) + 1} = 1 - \dfrac{\mathfrak{m}(\mathcal{S})}{n}.
	\end{equation}
	Again, without loss of generality, we may assume that $\alpha_{0} \leq \alpha_{1}$, thus for every $\nu_{0}$ there always exist $\nu_{1}$ such that $\nu_{0} \leq \nu_{1}$. Hence the condition (\ref{Ch3ConEquivJumPro1}) implies (\ref{Ch3ConEquivJumPro2}). Because $\alpha = \min \left\lbrace \alpha_{0}, \alpha_{1}\right\rbrace = \alpha_{0}$ the condition (\ref{Ch3ConEquivJumPro1}) and (\ref{Ch3EqCndMar}) are the same. Therefore, problem (\ref{Ch3EqJumPro}) is solvable if condition (\ref{Ch3EqCndMar}) is satisfied, and the proof is complete.
\end{proof}
We should note that in this case we actually have four solvability conditions that can be written as 
\begin{eqnarray*}
	\nu_{j} > 1 - \dfrac{\mathfrak{m}^{\pm}(\mathcal{S})}{n}, & j = 0,1.
\end{eqnarray*} 
When all these conditions are fulfilled, then we have the four solutions given by (\ref{Ch3EqSolCuater2nd}) where
\begin{eqnarray*}
	\Upsilon^{l}_{0}(x) = \phi^{l}_{0} + \int\limits_{\mathbb{R}^{n}}\mathcal{D}_{n-1} \phi^{l}_{0}(y)E_{n-1}(y-x)dV(y),  & l = 0, 1;
\end{eqnarray*}
\begin{eqnarray*}
	\Upsilon^{l}_{1}(x) = \phi^{l}_{1} + \int\limits_{\mathbb{R}^{n}}E_{n-1}(y-x)\mathcal{D}_{n-1} \phi^{l}_{1}(y)dV(y),  &  l = 0, 1;
\end{eqnarray*}
here $\phi^{0}_{j}(z) = u_{j}(z)\chi^{+}(z)$ and  $\phi^{1}_{j}(z) = u_{j}(z)\chi^{-}(z)\rho(z)$, where for $j = 0, 1$; $u_{j}(z)$ is a Whitney extension of $\overline{\widehat{g}_{0}(\ux)}$ and $\widehat{g}_{1}(\ux)$, respectively, to the entire space $\mathbb{R}^{n}$, $\chi^{+}(z)$ is the characteristic function of $\Omega^{+}$, $\chi^{-}(z) = -\chi^{*}(z)$, $\chi^{*}(z)$ is the characteristic function of $\Omega^{*}$ and $\rho(z)$ is the real valued smooth function with compact support that was defined in the proof of \cite[Theorem \ref{TheoSolvCond}]{TAB21}. 

We combine each of the two functions $\Upsilon^{l}_{0}(x)$ in (\ref{Ch3EqSolCuater2nd}) with each of the two possible values of $\Upsilon^{l}_{1}(x)$. In contrast, if in the vectorial approach, we repeat the method developed in \cite{TAB21} to solve the scalar additive Jump Problem in the paravectorial setting, we only obtain at most two ways for computing the solutions.

The following example also illustrates this idea. In order to do that, we restrict ourselves to the vectorial Clifford Analysis case for $n = 2$, i.e. the study of functions from $\mathbb{R}^{2}$ to $\mathcal{C}\ell(2)$. Hence the Dirac operator have the form $\partial_{2} = e_{1}\displaystyle\frac{\partial}{\partial x_{1}} + e_{2}\displaystyle\frac{\partial}{\partial x_{2}}$.

\subsubsection*{Example}

A family of curves that will serve as the boundary of the problem, which is adapted from that presented in \cite{1DKats16} can be constructed. Nevertheless, the basic idea goes back as far as \cite{BKats83} and generalized to higher dimensions in \cite{TamayoCastro2022, TamayoCastro2023}. We include here the sketch of the construction for completeness.

We start by considering the two-dimensional square $Q = [0, 1]\times[-1, 0]$. The main idea is to add, to this square, infinitely many  rectangles with appropriate lengths and widths. To this end, let us fix $\alpha \geq 1$ and $\beta \geq 1$. The segment $[0,1]$ on the $x_{1}$ axis is conveniently divided. We decompose it into the subsegments $[2^{-m},2^{-m+1}]$ for each $m \in \mathbb{N}$, and subdivide each of these subsegments into $2^{[m\beta]}$ equally spaced segments, where $[m\beta]$ stands for the integer part of $m\beta$. We denote by $y_{mj}$ the endpoints to the right of these segments, where $j = 1, 2, ..., 2^{[m\beta]}$. Furthermore, the distance between two consecutive points $y_{mj}$ and $y_{m(j+1)}$ is $a_{m} = 2^{-m - [m\beta]}$, and let $B_{m} =\frac{1}{2}a_{m}^{\alpha}$. Then let the rectangles $R_{mj}$ be 
\begin{equation*}
	R_{mj} =  [y_{mj} - B_{m}, y_{mj}]\times[0, 2^{-m}].
\end{equation*}
Thus, we set 
\begin{equation*}
	T_{\alpha, \beta} := Q\bigcup\left(\bigcup_{m = 1}^{\infty}\bigcup_{j = 1}^{2^{[m\beta]}}R_{mj}\right).	
\end{equation*}
The boundaries of the corresponding $T_{\alpha, \beta}$ are the curves $\mathcal{O}_{\alpha, \beta}$, see Figure \ref{fig:curva}. 
\begin{figure}[ht]
	\centering
	\includegraphics[width=0.7\linewidth]{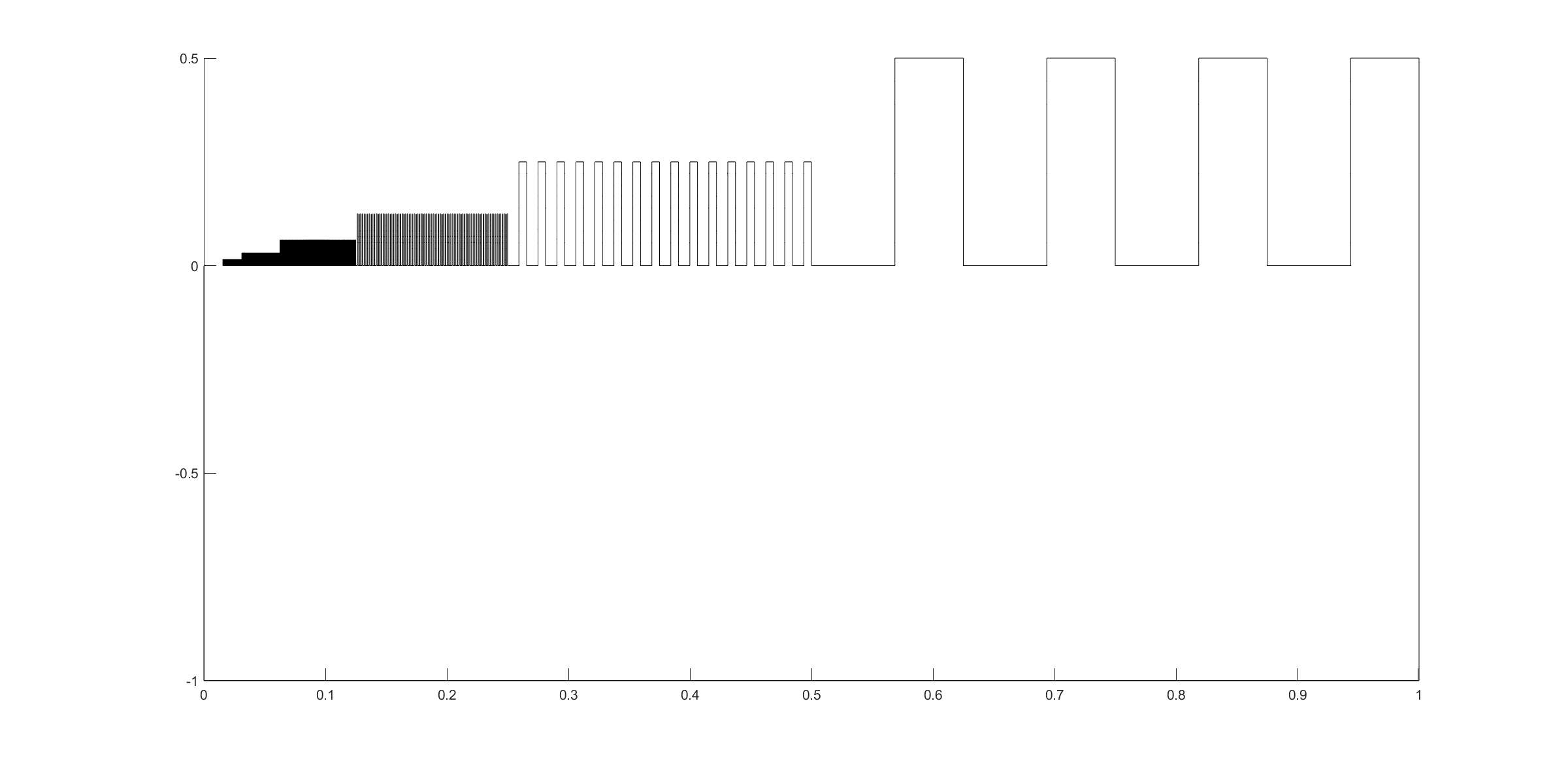}
	\caption[]{Curve $\mathcal{O}_{\alpha, \beta}$ for $\beta = 2.2 $ and $\alpha = 1.05$}
	\label{fig:curva}
\end{figure}
We follow \cite{1DKats16}, to see that for every $\alpha \geq 1$ and $\beta \geq 1$, the values of the inner and outer Marcinkiewicz exponent, are $\mathfrak{m}^{+}(\mathcal{O}_{\alpha, \beta}) = 1 - \displaystyle\frac{\beta - 1}{(\beta+1)\alpha}$ and  $\mathfrak{m}^{-}(\mathcal{O}_{\alpha, \beta}) = \displaystyle\frac{2}{\beta+1}$, respectively.

Now we consider the curves $\mathcal{O}_{\alpha, \beta}$ and the function 
$$g(x) = x_{1}x_{2} + (x_{1} + x_{2})e_{1}+ (x_{1} - x_{2})e_{2} + f(x_{1})e_{12},$$ 
where
\begin{equation*}
	f(\xi) = \left\lbrace \begin{array}{cc}
		\xi^{\frac{\beta}{\beta + 1}} \log\xi,	& \mathrm{if} \,\,\, \xi \neq 0, \\
		0,	& \mathrm{if} \,\,\, \xi = 0.
	\end{array}\right. 
\end{equation*}
The task is now to find a monogenic function $\Phi$ on $\R^{2}\setminus\mathcal{O}_{\alpha, \beta}$ continuously extendable from $\Omega^{\pm}$ to $\mathcal{O}_{\alpha, \beta}$ such that its boundary values $\Phi^{\pm}$ on $\mathcal{O}_{\alpha, \beta}$ satisfy the following conditions
\begin{equation}\label{ExJumpProb}
	\begin{array}{cc}
		\Phi^{+}(x) - \Phi^{-}(x) = g(x), & x \in \mathcal{O}_{\alpha, \beta}, \\
		\Phi^{-}(\infty) = 0. &   
	\end{array}
\end{equation}
Let us first show that for every $\nu < \displaystyle\frac{\beta}{\beta + 1}$ function $g \in C^{0,\nu}(\mathcal{O}_{\alpha, \beta})$, while $g \notin C^{0,\frac{\beta}{\beta + 1}}(\mathcal{O}_{\alpha, \beta})$.

For this purpose, we shall employ a similar result, given in \cite{Fiorenza2016}, for the function $f$ on the unitary segment, i.e. $f \in C^{0,\nu}([0,1])$ for every $\nu < \displaystyle\frac{\beta}{\beta + 1}$, while $f \notin C^{0,\frac{\beta}{\beta + 1}}([0,1])$. 

Then, the triangular inequality  in conjunction with standard properties of H\"older functions, we get
\begin{equation*}
	\left| g(x) - g(y) \right| < \left| f(x_{1}) - f(y_{1}) \right| + C\left| x - y \right|. 
\end{equation*}
Therefore,
\begin{eqnarray*}
	\left| g(x) - g(y) \right| < C\left| x - y \right|^{\nu},
\end{eqnarray*}
for all $\nu < \frac{\beta}{\beta + 1}$.

Now, we suppose that $g \in C^{0,\nu}(\mathcal{O}_{\alpha, \beta})$ where $1 \geq \nu \geq \displaystyle\frac{\beta}{\beta + 1}$, then
\begin{eqnarray*}
	\left| f(x) - f(y) \right| < \left| g(x) - g(y) \right| < C\left| x - y \right|^{\nu}, 
\end{eqnarray*}
which just means that
\begin{eqnarray*}
	\left|f(x_{1}) - f(y_{1}) \right| < C\left| x_{1} - y_{1} \right|^{\nu}, 
\end{eqnarray*}
which is a contradiction.

Analogously, $g_{0} \in C^{0,\nu}(\mathcal{O}_{\alpha, \beta})$ for every $\nu < \frac{\beta}{\beta + 1}$, while $g_{0} \notin C^{0,\frac{\beta}{\beta + 1}}(\mathcal{O}_{\alpha, \beta})$. However, $g_{1} \in C^{0,1}(\mathcal{O}_{\alpha, \beta})$.

Then, for any $\alpha \geq 1$ and $\beta \geq 1$, the function $g$ does not satisfy the condition 
\begin{eqnarray*}
	\nu > 1 - \dfrac{\mathfrak{m}^{-}(\mathcal{O}_{\alpha, \beta})}{2},
\end{eqnarray*} 
and in a similar way the even part $g_{0}$ do not satisfy the condition
\begin{eqnarray*}
	\nu_{0} > 1 - \dfrac{\mathfrak{m}^{-}(\mathcal{O}_{\alpha, \beta})}{2}.
\end{eqnarray*} 
However, the odd part $g_{1}$ satisfy both conditions
\begin{eqnarray*}
	\nu_{1} > 1 - \dfrac{\mathfrak{m}^{\pm}(\mathcal{O}_{\alpha, \beta})}{2}.
\end{eqnarray*} 
Therefore, if we repeat the procedure developed in \cite{TamayoCastro2022} to prove Theorem \ref{Ch3TheoSolvCond} we will get only one representation of the solution. In contrast, using this method, we get two representations of the solution given by (\ref{Ch3EqSolCuater2nd}) where
\begin{eqnarray*}
	\Upsilon_{0}(x) = \phi_{0} + \int\limits_{\mathbb{R}^{2}}\mathcal{D}_{1} \phi_{0}(y)E_{1}(y-x)dV(y),
\end{eqnarray*}
\begin{eqnarray*}
	\Upsilon^{l}_{1}(x) = \phi^{l}_{1} + \int\limits_{\mathbb{R}^{2}}E_{1}(y-x)\mathcal{D}_{1} \phi^{l}_{1}(y)dV(y),  &  l = 0, 1;
\end{eqnarray*}
here $\phi_{0}(z) = v(z)\chi^{+}(z)$; $v(z)$ is a Whitney extension of $\overline{\widehat{g}_{0}(\ux)}$ to the entire plane, $\chi^{+}(z)$ is the characteristic function of $\Omega^{+}$. While, here $\phi^{0}_{1}(z) = u(z)\chi^{+}(z)$ and  $\phi^{1}_{1}(z) = u(z)\chi^{-}(z)\rho(z)$; $u(z)$ is a Whitney extension of $\widehat{g}_{1}(\ux)$ to the plane, again $\chi^{+}(z)$ is the characteristic function of $\Omega^{+}$, and $\chi^{-}(z) = -\chi^{*}(z)$, $\chi^{*}(z)$ is the characteristic function of $\Omega^{*}$ and once more $\rho(z)$ is the real valued smooth function with compact support that was defined in the proof of \cite[Theorem \ref{TheoSolvCond}]{TAB21}.

If $\Upsilon^{0}_{1}(x) \neq \Upsilon^{1}_{1}(x)$, then we would have two different solutions that are not provided for the method developed in \cite{TamayoCastro2022} for paravectorial Clifford Analysis. In the following theorem, we will present a unicity condition for the solution of problem (\ref{Ch3EqJumPro}).

\begin{theorem}\label{Ch3TheoUniMar}
	Let $\mathcal{S}$ be a topologically compact surface which is the boundary of a Jordan domain in $\mathbb{R}^{n}$, and let $f \in C^{0,\nu}(\mathcal{S})$, with $\nu > 1 - \dfrac{\mathfrak{m}(\mathcal{S})}{n}$ and 
	\begin{equation}\label{Ch3EqUniCndMar}
		\dim_{H}\mathcal{S} - (n - 1) < \mu < 1 - \dfrac{n(1 - \nu)}{\mathfrak{m}(\mathcal{S})}.
	\end{equation}
	Then the solution to the scalar additive Jump Problem (\ref{Ch3EqJumPro}) is unique in the classes $C^{0,\mu}(\overline{\Omega^{+}})$ and $C^{0,\mu}(\overline{\Omega^{-}})$.
\end{theorem}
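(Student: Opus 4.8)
The plan is to reduce Theorem~\ref{Ch3TheoUniMar} to its paravectorial predecessor, Theorem~\ref{TheoUniMar}, through the reduction procedure of Section~\ref{SecReductionProcedure}, in the same spirit in which Theorem~\ref{Ch3TheoSolvCond} was deduced from Theorem~\ref{TheoSolvCond}. First I would reformulate uniqueness as a statement about the homogeneous problem: if $\Phi^{(1)}$ and $\Phi^{(2)}$ are two solutions of (\ref{Ch3EqJumPro}) belonging to the classes $C^{0,\mu}(\overline{\Omega^{+}})$ and $C^{0,\mu}(\overline{\Omega^{-}})$ and sharing the prescribed value at infinity, then $\Phi:=\Phi^{(1)}-\Phi^{(2)}$ is monogenic on $\R^{n}\setminus\mathcal{S}$, lies in $C^{0,\mu}(\overline{\Omega^{\pm}})$, vanishes at infinity, and satisfies $\Phi^{+}=\Phi^{-}$ on $\mathcal{S}$; it is enough to prove $\Phi\equiv 0$. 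Writing $\Phi=\Phi_{0}+e_{i_{0}}\Phi_{1}$ and applying $\beta_{i_{0}}$ and $\alpha_{i_{0}}$ as in Section~\ref{SecReductionProcedure}, the functions $\Upsilon_{0}:=\overline{\widehat{\Phi}_{0}}$ and $\Upsilon_{1}:=\widehat{\Phi}_{1}$ are respectively right and left monogenic on $\R^{n}\setminus\mathcal{S}$ by (\ref{SistMon}), and, since here $g\equiv 0$, (\ref{Ch3EqEquivJumPro}) degenerates into the two homogeneous paravectorial jump problems $\Upsilon_{0}^{+}=\Upsilon_{0}^{-}$ and $\Upsilon_{1}^{+}=\Upsilon_{1}^{-}$ on $\mathcal{S}$, with $\Upsilon_{0}(\infty)=\Upsilon_{1}(\infty)=0$.

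The next step is to check that each of these two paravectorial problems satisfies the hypotheses of Theorem~\ref{TheoUniMar}, read for a surface in $\R^{(n-1)+1}$ and, via Remark~\ref{RemarkRightMonValid}, also for right monogenic functions. The maps $\alpha_{i_{0}}:\mathcal{C}\ell(n)^{(1)}\to\mathcal{C}\ell(n-1)^{(0)}\oplus\mathcal{C}\ell(n-1)^{(1)}$ and $\beta_{i_{0}}:\mathcal{C}\ell(n)^{+}\to\mathcal{C}\ell(n-1)$ are linear bijections, hence bi-Lipschitz, so they preserve H\"older classes and H\"older exponents; moreover $\alpha_{i_{0}}$ is, up to relabeling of coordinates, an isometry of $\R^{n}$, so $\alpha_{i_{0}}(\mathcal{S})$ is again a topologically compact surface bounding a Jordan domain in $\R^{n}$ with $\dim_{H}\alpha_{i_{0}}(\mathcal{S})=\dim_{H}\mathcal{S}$ and, because the integrals $I_{p}$ are invariant under isometries, $\mathfrak{m}^{\pm}(\alpha_{i_{0}}(\mathcal{S}))=\mathfrak{m}^{\pm}(\mathcal{S})$. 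In particular $\Upsilon_{0},\Upsilon_{1}$ belong to $C^{0,\mu}$ of the corresponding image domains, and, using the pointwise bounds $|g(\ux)-g(\uy)|\ge|g_{0}(\ux)-g_{0}(\uy)|$ and its odd-part analogue exhibited in the proof of Theorem~\ref{Ch3TheoHoldPart}, the data $\overline{\widehat{g}_{0}}$ and $\widehat{g}_{1}$ of the two problems lie in $C^{0,\nu}(\mathcal{S})$. Finally, performing the substitution $n+1\rightsquigarrow n$, the hypothesis $\nu>1-\mathfrak{m}(\mathcal{S})/n$ and the two-sided bound (\ref{Ch3EqUniCndMar}) are exactly the conditions $\nu>1-\mathfrak{m}(\mathcal{S})/((n-1)+1)$ and (\ref{EqUniCndMar}) required by Theorem~\ref{TheoUniMar} for a surface in $\R^{(n-1)+1}$.

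Hence Theorem~\ref{TheoUniMar} together with Remark~\ref{RemarkRightMonValid} applies to each of the two homogeneous paravectorial jump problems and forces $\Upsilon_{0}\equiv 0$ and $\Upsilon_{1}\equiv 0$; the inversion formula (\ref{Ch3EqSolCuater2nd}) then gives $\Phi=\beta_{i_{0}}^{-1}(\overline{\Upsilon_{0}})+e_{i_{0}}\beta_{i_{0}}^{-1}(\Upsilon_{1})\equiv 0$, i.e. $\Phi^{(1)}=\Phi^{(2)}$, which is the asserted uniqueness. The one point I expect to need genuine care is the middle paragraph: verifying that the equivalence established in Section~\ref{SecReductionProcedure} between (\ref{Ch3EqJumPro}) and the system (\ref{Ch3EqEquivJumPro}) — including the monogenicity splitting (\ref{SistMon}) and the normalization at infinity — remains fully valid when $\mathcal{S}$ is only fractal and the ambient solution class is $C^{0,\mu}(\overline{\Omega^{\pm}})$ rather than the smooth setting of \cite{TAB21}, and that $\dim_{H}$ and the Marcinkiewicz exponents are indeed transported unchanged by $\alpha_{i_{0}}$; once this bookkeeping is secured, the result is an immediate citation of Theorems~\ref{TheoUniMar} and \ref{Ch3TheoHoldPart} and Remark~\ref{RemarkRightMonValid}.
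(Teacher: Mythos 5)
Your proposal is correct and follows essentially the same route as the paper: decompose via $\beta_{i_{0}},\alpha_{i_{0}}$ into the two independent paravectorial jump problems for $\Upsilon_{0}$ and $\Upsilon_{1}$, apply Theorem \ref{TheoUniMar} (with Remark \ref{RemarkRightMonValid}) to each, and observe that the single condition (\ref{Ch3EqUniCndMar}) stated with $\nu$ implies the two component conditions because the binding constraint corresponds to the smaller of the exponents $\nu_{0},\nu_{1}$. The paper's own proof is terser --- it simply writes down the two inequalities in $\nu_{0}$ and $\nu_{1}$ and reconciles them using $\alpha=\min\{\alpha_{0},\alpha_{1}\}$ from Theorem \ref{Ch3TheoHoldPart} --- whereas you make explicit the homogeneous-problem reformulation and the invariance of $\dim_{H}$ and $\mathfrak{m}^{\pm}$ under the relabeling isometry, which the paper leaves implicit.
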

In the same manner as in Theorem \ref{TheoUniMar}, the unicity in Theorem \ref{Ch3TheoUniMar} is assumed when there exists a value of $\mu$ such that condition (\ref{Ch3EqUniCndMar}) is fulfilled.
\begin{proof}
	Working in the same way for unicity conditions, we have that the solution is unique in the classes $C^{0,\mu}(\overline{\Omega^{+}})$ and $C^{0,\mu}(\overline{\Omega^{-}})$ where $\mu$ must satisfy the next inequalities,
	\begin{equation}\label{Ch3EqUniCndMarPr0}
		\dim_{H}\mathcal{S} - (n - 1) < \mu < 1 - \dfrac{n(1 - \nu_{0})}{\mathfrak{m}(\mathcal{S})},
	\end{equation}
	and 
	\begin{equation}\label{Ch3EqUniCndMarPr1}
		\dim_{H}\mathcal{S} - (n - 1) < \mu < 1 - \dfrac{n(1 - \nu_{1})}{\mathfrak{m}(\mathcal{S})},
	\end{equation}
	We assume one time more and causing no loss of generality, that $\alpha_{0} \leq \alpha_{1}$. Thus for every $\nu_{0}$ always exist $\nu_{1}$ such that $\nu_{0} \leq \nu_{1}$. Hence, we have that
	\begin{equation*}
		1 - \dfrac{n(1 - \nu_{0})}{\mathfrak{m}(\mathcal{S})} \leq 1 - \dfrac{n(1 - \nu_{1})}{\mathfrak{m}(\mathcal{S})}.
	\end{equation*}
	Consequently, the condition (\ref{Ch3EqUniCndMarPr1}) holds when condition (\ref{Ch3EqUniCndMarPr0}) is fulfilled. Accordingly, the solution to the scalar additive Jump Problem (\ref{Ch3EqJumPro}) is unique in the classes $C^{0,\mu}(\overline{\Omega^{\pm}})$, where 
	\begin{equation*}
		\dim_{H}\mathcal{S} - (n - 1) < \mu < 1 - \dfrac{n(1 - \nu)}{\mathfrak{m}(\mathcal{S})},
	\end{equation*}
	and the proof is complete.
\end{proof}
We should note that if the representations of the solutions give two different functions this means that 
\begin{equation*}
	\dim_{H}\mathcal{S} - (n - 1)  \geq 1 - \dfrac{n(1 - \nu)}{\mathfrak{m}(\mathcal{S})}.
\end{equation*}

\subsection{Case of null odd part in lower dimensions}
This subsection deals with the particular (but important) case of \eqref{ProbOrig} when we consider functions $f: \mathbb{R}^{2} \rightarrow \mathcal{C}\ell(2)$.  When $G_{1} \equiv 0$ problem (\ref{CondComp2}) turns into two independent RBVPs in complex analysis with the same coefficient $\overline{\widehat{G}_{0}(\ux)}$. 
\begin{equation}\label{Ch3EqSysLowFrac}
	\Big\lbrace \begin{array}{c}
		\Upsilon_{0}^{+}(\ux) - \overline{\widehat{G}_{0}(\ux)}\Upsilon_{0}^{-}(\ux) = \overline{\widehat{g}_{0}(\ux)} \\
		\Upsilon_{1}^{+}(\ux) -\overline{\widehat{G}_{0}(\ux)}\Upsilon_{1}^{-}(\ux) = \widehat{g}_{1}(\ux).
	\end{array} 
\end{equation}
Due to the fact that $G(\ux) \neq 0$ then $\overline{\widehat{G}_{0}(\ux)} \neq 0$. Defining $Ind(G):= Ind(\widehat{G}_{0})= \aleph$, yields $Ind(\overline{\widehat{G}_{0}}) = -\aleph$. Following the standard techniques of the RBVP theory, we are able to reduce the solvability of this problem to that of the scalar additive Jump problem. In \cite{1DKats16}, this has previously been used for the RBVP in complex analysis.

\begin{equation*}
	\begin{array}{cc}
		X^{+}(z) = e^{\varGamma(z)},	& X^{-}(z) = z^{\aleph}e^{\varGamma(z)},
	\end{array} 
\end{equation*}
where $\varGamma(z)$ is the solution of the problem
\begin{eqnarray*}
	\varGamma^{+}(t) - \varGamma^{-}(t) = \log[t^{\aleph}\overline{\widehat{G}_{0}(t)}], & t \in \mathcal{S},
\end{eqnarray*} 
i.e.
\begin{equation}\label{EqPartSolRBVCiffFracGamma}
	\varGamma(t) = f - \dfrac{1}{2\pi i}\iint\limits_{\mathbb{C}}\dfrac{\partial f}{\partial\overline{\zeta}}\dfrac{d\zeta d\overline{\zeta}}{(\zeta - z)}
\end{equation}
in which $f$ is either $u(z)\chi^{+}(z)$ or $u(z)\chi^{-}(z)\rho(z)$, where $u$ denotes a Whitney extension of $\log[t^{\aleph}\overline{\widehat{G}_{0}(t)}]$ to the whole complex plane. Again, $\chi^{+}(z)$ is the characteristic function of $\Omega^{+}$, $\chi^{-}(z) = -\chi^{*}(z)$, $\chi^{*}$ is the characteristic function of $\Omega^{*}$ and $\rho(z)$ is the smooth function with compact support defined in the proof of Theorem \ref{TheoSolvCond}, see \cite{TAB21}.

The functions $X^{\pm}$, commonly called canonical functions, see \cite{Gajov,Lu,Mu}, fulfill the following relation,
\begin{equation*}
	\dfrac{X^{+}(t)}{X^{-}(t)} = \overline{\widehat{G}_{0}(t)}.
\end{equation*}
Therefore, problem (\ref{Ch3EqSysLowFrac}) can be rewrote as
\begin{equation*}
	\left\lbrace  \begin{array}{c}
		\dfrac{\Upsilon_{0}^{+}(t)}{X^{+}(t)} - \dfrac{\Upsilon_{0}^{-}(t)}{X^{-}(t)} = \dfrac{\overline{\widehat{g}_{0}(t)}}{X^{+}(t)} \\
		\dfrac{\Upsilon_{1}^{+}(t)}{X^{+}(t)} - \dfrac{\Upsilon_{1}^{-}(t)}{X^{-}(t)} = \dfrac{\widehat{g}_{1}(t)}{X^{+}(t)} \\
	\end{array}. \right. 
\end{equation*}
Let functions $\Phi^{0}_{i}$ given by
\begin{eqnarray}\label{EqPartSolvRVBCliffFract}
	\Phi^{0}_{i} = \phi_{i} - \dfrac{1}{2\pi i}\iint\limits_{\mathbb{C}}\dfrac{\partial\phi_{i}}{\partial\overline{\zeta}}\dfrac{d\zeta d\overline{\zeta}}{\zeta - z} & i = 0,1
\end{eqnarray}
here $\phi_{i}(z)$ is either $u_{i}(z)\chi^{+}(z)$ or $u_{i}(z)\chi^{-}(z)\rho(z)$, where $u_{i}(z)$ are a Whitney extension of $\dfrac{\overline{\widehat{g}_{0}(\ux)}}{X^{+}(t)}$ and $\dfrac{\widehat{g}_{1}(\ux)}{X^{+}(t)}$, to the entire complex plane, respectively and the other functions involved are the same as in (\ref{EqPartSolRBVCiffFracGamma}).

Therefore, we have 
\begin{theorem}
	Suppose that, in problem (\ref{ProbOrig}), the functions $G(\ux)$ and $g(\ux)$ are H\"older continuous with exponent $\nu$ satisfying the condition $\nu > 1 - \frac{1}{2}\mathfrak{m}(\mathcal{S})$ and $G(\ux)$ is nonzero. If a solution is sought in the class of H\"older continuous functions with exponent $\mu$ in $\overline{\Omega^{+}}$ and $\overline{\Omega^{-}}$ and $\mu$ satisfies (\ref{EqUniCndMar}) then for $\aleph \leq 1$ the general solution is obtained by (\ref{Ch3EqSolCuater2nd}) where 
	\begin{equation*}
		\Upsilon_{0}(z) = X(z)[\Phi^{0}_{0}(z) + P_{0}(z)],
	\end{equation*}
	\begin{equation*}
		\Upsilon_{1}(z) = X(z)[\Phi^{0}_{1}(z) + P_{1}(z)],
	\end{equation*}
	where $P_{0}$ and $P_{1}$ are two polynomials of degree at most $-\aleph$;	for $\aleph = 1$ we put $P_{0} \equiv 0, P_{1} \equiv 0$;	for $\aleph > 1$ the problem has $2(\aleph - 1)$ solvability conditions.
\end{theorem}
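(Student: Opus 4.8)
The plan is to transplant the classical resolution of the Riemann problem (see \cite{Gajov,Lu,Mu,1DKats16}) to the fractal setting, by combining the reduction procedure of Section \ref{SecReductionProcedure} with the jump problem solvability results obtained above. First I would invoke the reduction theorem of Section \ref{SecReductionProcedure} with $i_{0}=1$ and $n=2$ to replace (\ref{ProbOrig}), in which $G_{1}\equiv0$, by the pair of independent complex Riemann problems (\ref{Ch3EqSysLowFrac}), both carrying the coefficient $\overline{\widehat{G}_{0}}$. Since $\beta_{i_{0}}$ and $\alpha_{i_{0}}$ act componentwise, Theorem \ref{Ch3TheoHoldPart} applied to $G$ and to $g$ gives $\overline{\widehat{G}_{0}},\widehat{g}_{0},\widehat{g}_{1}\in C^{0,\nu}(\mathcal{S})$ (shrinking $\nu$ slightly if needed so that the strict Marcinkiewicz inequality is preserved), while $G\neq0$ makes $\overline{\widehat{G}_{0}}$ nowhere zero; by construction its index is $-\aleph$.

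Next I would build the canonical functions. Because $t^{\aleph}\overline{\widehat{G}_{0}(t)}$ is single-valued, nowhere zero and Hölder-$\nu$ on $\mathcal{S}$, its logarithm again lies in $C^{0,\nu}(\mathcal{S})$, so the hypothesis $\nu>1-\tfrac{1}{2}\mathfrak{m}(\mathcal{S})$ together with Theorem \ref{Ch3TheoSolvCond} (and Remark \ref{RemarkRightMonValid}, plus Theorem \ref{Ch3TheoUniMar} to control the Hölder exponent up to $\overline{\Omega^{\pm}}$) yields the sectionally holomorphic $\varGamma$ of (\ref{EqPartSolRBVCiffFracGamma}) solving $\varGamma^{+}-\varGamma^{-}=\log[t^{\aleph}\overline{\widehat{G}_{0}}]$; then $X^{+}=e^{\varGamma}$ and $X^{-}=z^{\aleph}e^{\varGamma}$ are nonzero, Hölder on the closures, satisfy $X^{+}/X^{-}=\overline{\widehat{G}_{0}}$ on $\mathcal{S}$, and obey $X^{-}(z)\sim z^{\aleph}$ as $z\to\infty$. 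Dividing the two equations of (\ref{Ch3EqSysLowFrac}) by $X^{+}$ converts them into the two scalar additive jump problems stated just before the theorem, whose data $\overline{\widehat{g}_{0}}/X^{+}$ and $\widehat{g}_{1}/X^{+}$ are Hölder; provided the resulting exponent still exceeds $1-\tfrac{1}{2}\mathfrak{m}(\mathcal{S})$, Theorem \ref{Ch3TheoSolvCond} supplies the particular solutions $\Phi^{0}_{0}$ and $\Phi^{0}_{1}$ of (\ref{EqPartSolvRVBCliffFract}), which vanish at infinity.

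To describe all solutions I would take an arbitrary admissible $\Upsilon_{i}$ and observe that $\Upsilon_{i}/X-\Phi^{0}_{i}$ is holomorphic on $\R^{2}\setminus\mathcal{S}$, continuous up to $\mathcal{S}$ from both sides, and has vanishing jump across $\mathcal{S}$. Since $\mu$ lies in the range (\ref{EqUniCndMar}) we have $\mathcal{H}^{1+\mu}(\mathcal{S})=0$, so Theorem \ref{CorrDolz} forces this function to extend holomorphically across $\mathcal{S}$, hence to be entire. Near infinity $\Upsilon_{i}$ is bounded, $X^{-}\sim z^{\aleph}$ and $\Phi^{0}_{i}\to0$, so this entire function is $O(|z|^{-\aleph})$ and therefore a polynomial $P_{i}$ of degree at most $-\aleph$, with the convention that a negative degree means $P_{i}\equiv0$ (in particular $P_{0}\equiv P_{1}\equiv0$ when $\aleph=1$); conversely each $\Upsilon_{i}=X(\Phi^{0}_{i}+P_{i})$ solves the $i$-th problem, and (\ref{Ch3EqSolCuater2nd}) recovers the corresponding $\Phi$. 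When $\aleph\geq2$ the polynomial is already forced to vanish, so $\Upsilon_{i}=X\Phi^{0}_{i}$, and the boundedness of $\Upsilon_{i}^{-}$ at infinity requires the expansion of $\Phi^{0}_{i}$ at $\infty$ to have vanishing coefficients of $z^{-1},\dots,z^{-(\aleph-1)}$; this is $\aleph-1$ linear conditions for each of the two problems, i.e. the $2(\aleph-1)$ solvability conditions stated.

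The step I expect to be the real obstacle is the bookkeeping of Hölder exponents along the chain coefficient, then $\log$ of the coefficient, then canonical function, then divided right-hand side: each passage may lower the exponent, and one must check that both the Marcinkiewicz inequality $\nu>1-\tfrac{1}{2}\mathfrak{m}(\mathcal{S})$ and the accompanying window (\ref{EqUniCndMar}) for $\mu$ survive at every stage, so that Theorems \ref{Ch3TheoSolvCond} and \ref{Ch3TheoUniMar}, as well as Theorem \ref{CorrDolz}, keep applying. Intertwined with this is making the Liouville/removability argument at infinity rigorous within the prescribed Hölder class, which is precisely where the upper bound in (\ref{EqUniCndMar}) enters.
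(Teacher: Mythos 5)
Your proposal follows the same route the paper takes: reduce via the factorization $X^{\pm}$ built from the jump problem for $\log[t^{\aleph}\overline{\widehat{G}_{0}}]$, divide by $X^{+}$ to obtain two scalar jump problems solved by the Whitney--Teodorescu particular solutions $\Phi^{0}_{i}$ of (\ref{EqPartSolvRVBCliffFract}), and then account for the homogeneous solutions by a Liouville-type argument giving the polynomials $P_{i}$ and the $2(\aleph-1)$ solvability conditions. The paper leaves the final Liouville/removability step and the H\"older bookkeeping implicit, so your write-up is, if anything, more explicit than the original, and it is correct.
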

Here we do not write down explicitly the solvability conditions, which can be obtained by expanding the integrals of (\ref{EqPartSolvRVBCliffFract}) in a power series at $\infty$.


\subsection*{Conflict of interest}
The authors have no known competing financial interests or personal relationships that could have appeared to influence the work reported in this paper.

\subsection*{ORCID}
\noindent
Carlos Daniel Tamayo-Castro: https://orcid.org/0000-0002-5584-8274\\
Juan Bory-Reyes: https://orcid.org/0000-0002-7004-1794\\
Ricardo Abreu-Blaya: https://orcid.org/0000-0003-1453-7223

\bibliographystyle{plain}
\bibliography{BiblioBoundDecoMar_Preprint}
\end{document}